\numberwithin{equation}{section}
\theoremstyle{plain} 
\newtheorem{thm}{Theorem}[section]
\newtheorem{prop}[thm]{Proposition} 
\newtheorem{rmk}[thm]{Remark}
\begin{document}

\title{\textbf{Regularity for minimizers of scalar integral functionals}}

\author {   
Antonio Giuseppe Grimaldi \thanks{Dipartimento di Scienze Matematiche "G. L. Lagrange",
Politecnico di Torino, Corso Duca degli Abruzzi, 24, 10129 Torino, 
 Italy. E-mail: \textit{antonio.grimaldi@polito.it}} 
  , Elvira Mascolo \thanks{Universit\`{a} degli Studi di Firenze,
viale  Morgagni, 41125 Firenze, Italy.
E-mail: \textit{elvira.mascolo@unifi.it}}
 , Antonia Passarelli di Napoli
\thanks{Dipartimento di Matematica e Applicazioni "R.
Caccioppoli", Universit\`{a} degli Studi di Napoli ``Federico
II", via Cintia - 80126 Napoli.
E-mail: \textit{antpassa@unina.it}}
}

\maketitle
\maketitle

\begin{abstract}
We prove the local Lipschitz regularity of the local minimizers of scalar integral functionals of the form \begin{equation*}
\mathcal{F}(v;\Omega)= \int_{\Omega} f (x, Dv) dx
\end{equation*}
under $(p,q)$-growth conditions. The main novelty is that, beside a suitable regularity assumption on the partial map $x\mapsto f(x,\xi)$, we do not assume any special structure for the energy density as a function of the $\xi$-variable.
\end{abstract}

\medskip
\noindent \textbf{Keywords}: Regularity; Local minimizer; Nonstandard growth; Local Lipschitz continuity.
\medskip \\
\medskip
\noindent \textbf{MSC 2020:} 35J87; 49J40; 47J20.

%\maketitle

\section{Introduction}
\label{Introduction}
The classical problem of the Calculus of Variations is formulated as finding a function $u$ assuming a given value $u_0$  at the boundary $\partial \Omega$ of an
open regular bounded set $\Omega \subset \mathbb{R}^n$ which minimizes an integral functional of the form
\begin{equation}\label{functional}
\mathcal{F}(v;\Omega)= \int_{\Omega} f (x, Dv) dx
\end{equation}
among all functions $ v :\Omega \subset \mathbb{R}^n\rightarrow \mathbb{R}$ , assuming the  boundary value  $u_0$.
 The 
functional space where to get the solutions depends on the growth conditions of $f = f (x, \xi)$
as  $|\xi| \rightarrow +\infty$. 
 It is well known that, if the energy density satisfies the following lower bound 
$$
f (x, \xi) \geq C_1|\xi|^p ,
$$
for some exponent   $p>1$, the right functional space is the Sobolev class $W^{1,p}(\Omega)$. If the partial map  
$\xi\mapsto f (x, \xi)$ is convex (strictly convex), the semicontinuity property gives a sufficient conditions  to obtain the existence (and uniqueness) of minimizers in $W_{0}^{1,p}(\Omega)+u_0$. 

A different question is the regularity of the minimizer: the problem is to find  the additional regularity properties to impose on $f = f (x, \xi)$ in \eqref{functional}, in order to obtain regular minimizer at least  in the interior of $\Omega$.

Let us consider integrand $f = f (x, \xi)$ satisfying $(p,q)$-growth conditions  
of the type 
\begin{equation}\label{p,q-growth}
  C_1 |\xi|^p\le f (x, \xi) \leq C_2(1+|\xi|)^q, \quad  \,\,  \textrm{a.e.} \,\, x \in \Omega,\,\, \xi \in \mathbb{R}^n  
\end{equation}
for $ 1 <  p\le q$ and $C_1,C_2$  positive constants. 
In the case $p=q$, i.e. 
$$ 
C_1|\xi|^{p}\leq f(x,\xi)\leq C_2(1+|\xi|)^{p}, 
$$
the regularity of the minimizers is nowadays a classical topic and we refer the interested reader, for example, to the monograph \cite{Giusti} and the references therein.
% first important Holder regularity result is due to De Giorgi \cite{} in 1957, for elliptic equation, then 
% Stampacchia \cite{} in 1958-1960 complete analysis  in the linear case
% Serrin \cite{} 1964-1965 give the  analysis in nonlinear case, completed in the book of Ladyzhenskaya-Ural'tseva 1968 \cite{}. Finally in 1983-1985 
% Giaquinta and Giusti, \cite{}, obtain the  H\"older regularity for non differentiable functionals, in which the  Euler-Lagrange equation are not available

% When $p=q$, the existence by means of  Direct Methods  and  the regularity theory generally works  

However, there are many integral functionals (and the related Euler-Lagrange equations) which satisfy $(p,q)$-growth as in \eqref{p,q-growth}
 with $1<p<q$ and their 
study  is motivated both from a mathematical point of view and from several models that arise from different problems in mathematical physic. We give some examples:

\begin{itemize}
\item Perturbation of polynomial growth: %\cite{•}, \cite{•}
$$
f(\xi)=|\xi|^{p}\log^{\alpha}(1+|\xi|), \quad \, p \ge 1,\, \alpha>0
$$

\item Anisotropic growth %\cite{•}, \cite{•}
$$
f(\xi) = \sum_{i=1}^n |\xi_{i}|^{p_{i}}, \quad \,p_i \ge p,\, \forall i=1,\dots,n
$$

\item Double phase functional: Model for strongly anisotropic 
materials (see \cite{zhikov1})
$$
f(x,\xi)= |\xi|^{p}+a(x)|\xi|^{q}, \quad \, 0 \le a(x)
$$

\item Variable exponent: Model proposed for electrorheological fluids by Rajagopal and Ru\v {z}i\v{c}ka \cite{raru}, {image restoration by Chen et al.\ \cite{chen}}, growth of heterogeneous sandpiles by Bocea et al.\ \cite{bocea}
$$
f(x,\xi)= |\xi|^{p(x)} \,\, \textrm{and} \,\, f(x,\xi)=[h(|\xi|)]^{p(x)}, \quad \, p \le p(x)\le q
$$
\item Anisotropic variable exponents
$$
f(x, \xi) \sim \sum_{i=1}^{n}|\xi_i|^{p_{i}(x)}, \quad \, \xi= (\xi_1,.....,\xi_n), \, p\leq p_i(x)\leq q
$$

\end{itemize}
Let us recall that, under $(p,q)$-growth conditions, a function $u\in W_{\rm loc}^{1,1}(\Omega)$ is a {\em  local minimizer} of
\eqref{functional} iff {$f(x,Du)\in L^1_{\mathrm{loc}}(\Omega)$ and} 
\begin{equation*}
\mathcal{F}(u;\text{supp\ }\varphi)\le  \mathcal{F}(u+\varphi;\text{supp\ }\varphi),
\end{equation*}
for all $\varphi\in W^{1,1}(\Omega)$ with $\operatorname{supp}\varphi\Subset \Omega$.  
\\
It is now well understood that, in order to obtain some regularity for the local minimizers of functionals with $(p,q)$-growth, $p<q$,  a restriction between the exponents $p$ and $q$ must be imposed (see the  counterexamples by Marcellini, \cite{mar91}.

Indeed, the gap $\frac{q}{p}$ cannot differ to much from $1$ and
sufficient conditions to the regularity can be expressed as 
$$\frac{q}{p}\le c(n)\displaystyle \to 1\,\,\textnormal{as}\,\,\,\,n\to \infty$$
{according to \cite{mar87, mar91}. }

The study of regularity properties of local minima to functionals with general growth started with the pioneering papers by Marcellini \cite{mar} , see also \cite{mar91, mar93}. In recent years there has been a considerable of interest in variational problems under nonstandard growth conditions.

A very interesting model case of functionals with $(p,q)$-growth is the so called double phase, whose energy density has the form 
\begin{equation*}
    f(x,\xi):=|\xi|^p+a(x)|\xi|^q
\end{equation*}
with $a(x) \in \mathcal{C}^{0,\alpha}(\Omega)$. This model case has been widely investigated in {\cite{BCM,colmin,colmin2, 25dfm,FMM}} with the aim of identifying sufficient and necessary conditions on the relation between $p$, $q$ and $\alpha$ to establish the H\"older continuity of the gradient of the local minimizers.

 The sharp condition on the gap is given by
$$\dfrac{q}{p} \le 1+ \dfrac{\alpha}{n}. $$
{The subject has been further developed in the recent articles by De Filippis and Mingione \cite{DM,Inventiones}.}
{For the study of the boundedness of minimizers we refer to  \cite{fuscsbor2, fussbo93} and more recently \cite{Cupini-Marcellini-Mascolo 2023}.  }

Other papers that deserve to be quoted are \cite{cupmarmas4,EMM1,el-pas1, el-pas2} for the case of elliptic equations and  \cite{boe1,boe2,boe3} for the case of parabolic equations and systems under $(p,q)$-growth.
{For a complete overview on the subject, we refer to the recent survey \cite{mar20-2, mar20-3, Mingione-Radulescu}.
}

% In Theorem 3.1 for $f\in C^2(\Omega\times R^n)$ satisfying the $p,q$-growth \eqref{p,q-condizioni}, under some assumptions on $p,q$ and on $f$,  we prove  the local minimizers of $\mathcal{F}$, are local Lipschitz continuous in $\Omega$.

It has also to be noted  that, in case of $(p,q)$-growth conditions, the presence of the $x$-variables cannot be treated as a simple perturbation. Actually,  the Lavrentiev phenomenon can occur and this is is a {{clear}} obstruction to the regularity of the minimizers. 
{In the very recent paper \cite{DefilLeo}, the authors proved the absence of the Lavrentiev phenomenon in case the energy density can be suitably approximate from below with a sequence of smooth integrands with standard growth conditions.}

Usually, the regularity of the minimizers follows  by means of \textit{a suitable  approximation procedure},
%Consider an integral functional
%\begin{equation}
%\mathcal{F}(u)= \int_{\Omega} f (x, Dv) dx
%\end{equation}
through the following steps:
\begin{itemize}
\item The first step consists in  proving an \textit{a priori} estimate for \textit{smooth minimizers} of $\mathcal{F}(u)$ with  constants   depending only on those appearing in the original assumptions;
\vskip.2cm
\item The second step consists in constructing a suitable  approximation  of the given integral  by means a sequence of regular $\mathcal{F}_h(u)$ with a unique \textit{smooth}  minimizers $u_h$;
\vskip.2cm
\item  The final  step consists in applying the a priori estimate to each  $u_h$,  and in showing that the sequence $(u_h)$  converges to a  minimizer of the original functional, which preserves the regularity properties. 
\end{itemize}

We {notice} that the approximation methods depend on the problem itself and it could be{ rather} different for different problems. Moreover it is worth to point out that the limiting process can be very delicate.

{While this procedure is well settled for functionals with the so called Uhlenbeck structure, i.e. $f(x,\xi)=\tilde f(x,|\xi|)$, for  energy densities  $f=f(x,\xi)$ without any additional structure conditions,} the  second two steps can be difficult {even in the scalar case}.

{The aim of this paper is to give a contribution to this problem, 
identifying   suitable assumptions on the partial map $x\mapsto f(x,\xi)$ and on the gap $\frac{q}{p}$}  which allow to approximate the  energy density $f(x,\xi)$ in \eqref{functional} with functionals of the form 
\begin{equation}
\label{specialcase}
f_h(x, \xi) = \sum_{i = 1}^N a_{i, h}(x) g_{i ,h}(\xi),  
\end{equation}
with $a_{i,h}(x) > 0$ a.e.\ in $\Omega$,  $g_{i,h}: \mathbb{R}^n \rightarrow [0, + \infty)$ strictly convex in $\xi$.

%{More precisely},  we  prove that
%$f_h$ converges uniformly to $f$, in $B_R\times K$, for all $B_R \subseteq \Omega$ and for all  $K$  compact of $\mathbb{R}^n$ and for every $u \in W^{1,p}(B_R)$ such that $f(x,Du) \in L^1(B_R)$.
%$$
%\lim_{h} \int_{B_{R}} f_h(x, Du) dx = \int_{B_{R}} f(x, Du)dx.
%$$

{It is worth pointing out that the approximating integrands usually satisfy standard growth conditions, but {in this case}  they  satisfy the same $(p,q)$-growth assumptions of the {original} integrand in \eqref{functional}. The approximation  of $f$ with integrands with the special structure  \eqref{specialcase}  allows us to apply the regularity result in \cite{el-mar-mas},  which ensures the local Lipschitz continuity of the approximating  minimizers.} 

Finally, we prove that the Lipschitz regularity of the approximating minimizers is preserved in passing to the limit and this implies the   Lipschitz regularity of  the local minimizer of the functional in \eqref{functional}.

The plan of the paper is briefly described. In Section \ref{sec2}, we give the precise assumptions (see (H1)--(H5)) and, in Section \ref{appsec}, we state the approximation results. The main idea is to generalize to $ (p,q)$-growth the approximation  result of Fonseca, Fusco and Marcellini \cite{fonsecafusco-marcellini} and utilize a suitable property of $f$ with respect to $x$,  similar  to the one considered  by Bildhauer and Fuchs \cite{bildfuchs} and by Esposito, Leonetti and Petricca in the framework of the Lavrentiev phenomenon in \cite{esp-leo-pet}.
Section \ref{resec} is devoted to the proof of the main result, by applying the Lipschitz regularity obtained by Euleuteri, Marcellini and Mascolo \cite{el-mar-mas} for the special structure \eqref{specialcase}. {Eventually, in Section \ref{sec5}, we show that the Lipschitz continuity result holds true also assuming strict convexity and growth conditions on $f$ only at infinity.}
We conclude exhibiting some examples of integral functionals with $(p,q)$-growth for which assumption (H5) holds true. We also show that  assumption (H5) is weaker than hypothesis (4) in \cite[Section 3]{esp-leo-pet} (see {\bf(iv)} in Section \ref{examples}).

\section{Assumptions and statement of the result} \label{sec2}

In this paper, we consider functional \eqref{functional}
where $f: \Omega \times \mathbb{R}^n\to [0,+\infty)$, $f=f(x,\xi)$, is strictly convex with respect to $\xi$ and satisfies the following assumptions:
\begin{itemize}
    \item[\bf{(H1)}] $f(x, \cdot) \in \mathcal{C}^2( \mathbb{R}^n)$, 
    \item[\bf{(H2)}] $f$ is \textit{$p$-uniformly convex}, that is there exist $p>1$ and $m>0$ such that  $$ m (1+|\xi|^2)^\frac{p-2}{2} |\lambda|^2 \le \sum_{i,j=1}^n f_{\xi_i\xi_j}(x,\xi)\lambda_i \lambda_j $$
    \item[\bf{(H3)}] there exists $M>0$ such that  $$|f_{\xi\xi}(x,\xi)| \le M (1+|\xi|^2)^\frac{q-2}{2},$$
    \item[\bf{(H4)}] there exist $q >p$ and $K>0$ such that 
    $$|f_{\xi x}(x,\xi)| \le K (1+|\xi|^2)^\frac{q-1}{2},$$
    \end{itemize}
    for almost every $x \in \Omega $ and for every $\xi,\lambda \in \mathbb{R}^n$. 
    
    Assume moreover that
    \begin{itemize}
    \item[\bf{(H5)}] for every $ \varepsilon >0 $ and every $x \in \Omega$ such that $B_{\varepsilon}(x) \Subset \Omega $, there exists $\tilde{y}=\tilde{y}(x,\varepsilon) $ such that with some function $c(\varepsilon) \ge 1$, $c(\varepsilon) \to 1$ as $\varepsilon \to 0$, we have $$f(\tilde{y},\xi) \le c(\varepsilon)f(y,\xi) \quad \text{for all } (y,\xi) \in  B_\varepsilon(x) \times \mathbb{R}^n .$$
\end{itemize}

\noindent We note that assumption (H4) implies that $f(x,\xi)=g(x,\xi)+f(x,0)$, for a function $g=g(x,\xi)$ verifying 
 \begin{itemize}
      \item[\bf{(H6)}] there exists $H>0$ such that for every $x,y \in \Omega$ and for every $\xi \in \mathbb{R}^n$, $$|g(x,\xi)-g(y,\xi)|\le H |x-y| (1+|\xi|^2)^\frac{q}{2}.$$
 \end{itemize}
For the proof of this result see Proposition \ref{proplip}. 
 From now on, without loss of generality, we will assume that $f$ satisfies (H6), i.e.\ $f(x,0) \equiv 0$; indeed, minimizers of  
 $$\mathcal{G}(u)= \int_{\Omega}  g(x,Du(x) )  dx$$
 are minimizers of $\mathcal{F}$.

{The main approximation result is the following}

\begin{thm}\label{thm1}
    Let $f$ satisfy (H1)--(H6)  with exponents $1<p<q$ such that 
    \begin{equation}\label{gap}
        q \le p \left( \dfrac{n+1}{n}  \right).
    \end{equation}
    Then, for every ball $B_R \Subset \Omega$, there exists a sequence of functions $f_h :B_R \times \mathbb{R}^n \to [0,+\infty)$, $f_h=f_h(x,\xi)$, with
    \begin{equation}\label{appf}
        f_h(x,\xi)= \sum_{i=1}^N a_{i,h}(x)g_{i,h}(\xi),
    \end{equation}
    satisfying (H1)--(H4) with constants $m$, $M$ and $K$ independent of $h$. 
    Moreover, $\xi \mapsto f_h(x,\xi)$ is strictly convex for all $x \in B_R$ and
    $$f_h(x,\xi) \to f(x,\xi) \ \text{uniformly in} \ B_R \times K, $$
for every $ K$ compact subset of $\mathbb{R}^n$. Furthermore, there exists a subsequence $(f_{h_k})$ such that for every $u \in W^{1,p}(B_R)$ such that $x \mapsto f(x,Du(x)) \in L^1(B_R)$
\begin{equation}\label{lim}
    \lim_k \int_{B_R} f_{h_k}(x,Du(x))dx = \int_{B_R} f(x,Du(x))dx.
\end{equation}

\end{thm}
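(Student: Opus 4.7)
The plan is to build $f_h$ by gluing together, via a partition of unity in $x$, finitely many $x$-independent profiles extracted from $f$ itself. Fix a sequence $\varepsilon_h\to 0$ and, for each $h$, cover $\overline{B_R}$ by balls $B_{\varepsilon_h}(x_{i,h})\Subset\Omega$, $i=1,\dots,N_h$, together with a smooth partition of unity $\{\varphi_{i,h}\}$ satisfying $\operatorname{supp}\varphi_{i,h}\subset B_{\varepsilon_h}(x_{i,h})$, $|\nabla\varphi_{i,h}|\le C/\varepsilon_h$, and overlap bounded by a dimensional constant. For each $i$ I would apply (H5) at $(x_{i,h},\varepsilon_h)$ to extract $\tilde y_{i,h}\in\overline{B_{\varepsilon_h}(x_{i,h})}$ satisfying
\begin{equation*}
f(\tilde y_{i,h},\xi)\le c(\varepsilon_h)\,f(y,\xi)\qquad\text{for every }y\in B_{\varepsilon_h}(x_{i,h}),\ \xi\in\mathbb{R}^n,
\end{equation*}
and set
\begin{equation*}
f_h(x,\xi):=\sum_{i=1}^{N_h}\varphi_{i,h}(x)\,f(\tilde y_{i,h},\xi).
\end{equation*}
This already has the form \eqref{appf} with $a_{i,h}=\varphi_{i,h}\ge 0$ and $g_{i,h}(\xi)=f(\tilde y_{i,h},\xi)$ strictly convex and of class $C^2$ by (H1)--(H2). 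Since $\sum_i\varphi_{i,h}\equiv 1$ on $B_R$, (H1), strict convexity, (H2) with the same $m$, and (H3) with the same $M$ follow immediately: differentiating $f_h$ twice in $\xi$ yields a convex combination of the Hessians $f_{\xi\xi}(\tilde y_{i,h},\xi)$, which inherits the pointwise two-sided bounds.

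The main obstacle is showing (H4) with a constant independent of $h$: the crude estimate $|(f_h)_{\xi x}|\le\sum_i|\nabla\varphi_{i,h}|\,|f_\xi(\tilde y_{i,h},\xi)|$ loses a factor $1/\varepsilon_h$ from $|\nabla\varphi_{i,h}|$. I would overcome this via the partition-of-unity identity $\sum_i\nabla\varphi_{i,h}\equiv 0$, which allows me to rewrite
\begin{equation*}
(f_h)_{\xi x}(x,\xi)=\sum_{i}\bigl(f_\xi(\tilde y_{i,h},\xi)-f_\xi(x,\xi)\bigr)\otimes\nabla\varphi_{i,h}(x).
\end{equation*}
Integrating (H4) for $f$ along the segment from $x$ to $\tilde y_{i,h}$ (entirely contained in $B_{\varepsilon_h}(x_{i,h})$ when $\varphi_{i,h}(x)\neq 0$) gives $|f_\xi(\tilde y_{i,h},\xi)-f_\xi(x,\xi)|\le 2K\varepsilon_h\,(1+|\xi|^2)^{(q-1)/2}$, which exactly compensates the $1/\varepsilon_h$; the bounded overlap then delivers (H4) for $f_h$ with constant $C(n)K$, independent of $h$.

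Uniform convergence $f_h\to f$ on $B_R\times K$ for any compact $K\subset\mathbb{R}^n$ follows at once from (H6): for $x\in\operatorname{supp}\varphi_{i,h}$ we have $|x-\tilde y_{i,h}|\le 2\varepsilon_h$, so
\begin{equation*}
|f_h(x,\xi)-f(x,\xi)|\le\sum_i\varphi_{i,h}(x)\,H|x-\tilde y_{i,h}|(1+|\xi|^2)^{q/2}\le 2H\varepsilon_h(1+|\xi|^2)^{q/2}\to 0
\end{equation*}
uniformly on $B_R\times K$. For \eqref{lim}, (H5) combined with $\sum_i\varphi_{i,h}\equiv 1$ yields the pointwise domination $f_h(x,\xi)\le c(\varepsilon_h)\,f(x,\xi)$, hence for $h$ large $2f(x,Du(x))$ is an integrable majorant; applying the previous uniform estimate at the fixed vector $\xi=Du(x)$ produces the a.e.\ pointwise limit $f_h(x,Du(x))\to f(x,Du(x))$, and dominated convergence concludes — in fact for the whole sequence, so the subsequence extraction in the statement is automatic.
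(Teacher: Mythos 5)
Your proposal is correct, and it takes a genuinely different route from the paper at the decisive step. The paper's partition-of-unity construction samples $f$ at the cube centers, $f_h(x,\xi)=\sum_i\varphi_{i,h}(x)f(x_{i,h},\xi)$; assumption (H5) plays no role in building $f_h$, it is only invoked later, inside the proof of \eqref{lim}. You instead sample at the points $\tilde y_{i,h}$ furnished by (H5). The verification of (H1)--(H4) is the same in spirit (in particular both proofs exploit $\sum_i D_x\varphi_{i,h}\equiv 0$ to telescope away the $O(1/\varepsilon_h)$ coming from $|D_x\varphi_{i,h}|$, compensated by the $O(\varepsilon_h)$ oscillation of $f_\xi$ from (H4)), but the payoff of your choice of sample points is the pointwise domination $f_h(x,\xi)\le c(\varepsilon_h)\,f(x,\xi)$, which is immediate because $\sum_i\varphi_{i,h}\equiv 1$. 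With that majorant, \eqref{lim} drops out of dominated convergence at once. The paper's argument for \eqref{lim} is considerably longer: it mollifies $u$, uses (H5) through Jensen's inequality on the convolution, needs the gap condition \eqref{gap} to control $\varepsilon_k^{-nq/p+1+n}$, and finishes with a diagonal extraction that produces a $u$-dependent subsequence. Your version gives convergence of the \emph{full} sequence and does not invoke \eqref{gap} at all in the proof of Theorem \ref{thm1} (it is still needed afterwards for Theorem \ref{thmEMM}). Note also that your proof establishes exactly the stated identity $\lim_k\int f_{h_k}(x,Du)\,dx=\int f(x,Du)\,dx$, whereas the paper's diagonal argument literally yields $\lim_k\int f_{h_k}(x,Du_{\varepsilon_k})\,dx=\int f(x,Du)\,dx$, which is not verbatim the assertion; your approach avoids this discrepancy.

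One point you use without comment, but which is also implicit in the paper's own use of (H5): as written, (H5) does not say that $\tilde y=\tilde y(x,\varepsilon)$ lies in $\overline{B_\varepsilon(x)}$, yet your estimates for uniform convergence and for the mixed derivative $(f_h)_{\xi x}$ both need $|\tilde y_{i,h}-x|\lesssim\varepsilon_h$ on $\operatorname{supp}\varphi_{i,h}$. All of the paper's examples produce $\tilde y\in\overline{B_\varepsilon(x)}$ and the paper's own Step~2 tacitly assumes $\tilde x\in B_{\varepsilon_k}(x)$, so this is clearly the intended reading; still, it would be worth stating explicitly that you select $\tilde y_{i,h}\in\overline{B_{\varepsilon_h}(x_{i,h})}$, since your construction of $f_h$ depends on it in a way the paper's does not.
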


We'd like to remark that
the assumptions (H1)--(H4) in Theorem \ref{thm1} allow us to prove the existence of the approximating sequence
\eqref{appf}. Our proof is inspired by the one of \cite[Theorem 2.7]{fonsecafusco-marcellini}, which concerns the case of standard growth conditions. On the other hand, the hypotheses (H5) and (H6) are used in the proof of Theorem \ref{thm1} in order to obtain \eqref{lim}.

%As a consequence of theorems \ref{thm1} and \ref{thmEMM},
We are able to derive the following Lipschitz regularity result.

\begin{thm}\label{thm2}
    Let $f $ satisfy (H1)--(H6) with exponents $1 < p < q$ such that \eqref{gap} is in force. Let $\bar{u} \in W^{1,p}(\Omega)$ be a local minimizer of \eqref{functional}. Then, $\bar{u} \in W^{1,\infty}_{loc}(\Omega)$ and the following estimate holds
    \begin{equation}\label{lipestimate}
      \Vert D \bar{u} \Vert_{L^\infty(B_\rho)} \le C \left[1+ \int_{B_R} f(x,D \bar{u})dx  \right]^\frac{\sigma}{p},  
    \end{equation}
     for every concentric balls $B_\rho \subset B_R \Subset \Omega$, with $C$ and $\sigma$ positive constants depending at most on $n,p,q,m,M,K,\rho$ and $R$.
\end{thm}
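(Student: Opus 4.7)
The plan is to execute the three-step approximation scheme announced in the introduction: approximate $f$ by integrands of the product form \eqref{appf} via Theorem \ref{thm1}, invoke the Lipschitz regularity result of \cite{el-mar-mas} on the approximating minimizers, and pass to the limit while preserving the bound. Concretely, I would fix concentric balls $B_\rho \Subset B_R \Subset \Omega$ and apply Theorem \ref{thm1} on $B_R$ to produce a sequence $(f_h)$ satisfying (H1)--(H4) with constants $m,M,K$ independent of $h$. For each $h$, I define $u_h$ as the unique solution of the Dirichlet problem
\[
\min\!\left\{\int_{B_R} f_h(x, Dv)\,dx : v - \bar u \in W^{1,p}_0(B_R)\right\},
\]
whose existence and uniqueness follow from strict convexity and the $p$-coercivity inherited from (H2). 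Since $f_h$ has the special structure \eqref{appf}, the main theorem of \cite{el-mar-mas} applies and yields the a priori estimate
\[
\|Du_h\|_{L^\infty(B_\rho)} \le C\left[1 + \int_{B_R} f_h(x, Du_h)\,dx\right]^{\sigma/p},
\]
with $C,\sigma$ depending only on $n,p,q,m,M,K,\rho,R$ (crucially, \emph{not} on $h$).

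Next, I would make the right-hand side uniform in $h$. By minimality of $u_h$ for $\mathcal F_h$, one has $\int_{B_R} f_h(x, Du_h)\,dx \le \int_{B_R} f_h(x, D\bar u)\,dx$; the convergence \eqref{lim} in Theorem \ref{thm1} then forces the latter to converge, along a subsequence, to $\int_{B_R} f(x, D\bar u)\,dx$, which is finite because $\bar u$ is a local minimizer. This yields the desired uniform bound on $\|Du_h\|_{L^\infty(B_\rho)}$. The $p$-coercivity of $f_h$ (uniform in $h$) further provides a uniform $W^{1,p}(B_R)$ bound, so up to subsequence $u_h \rightharpoonup u^\star$ in $W^{1,p}(B_R)$ with $u^\star - \bar u \in W^{1,p}_0(B_R)$. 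One identifies $u^\star = \bar u$ by combining weak lower semicontinuity of $v \mapsto \int f(x,Dv)\,dx$ (valid under convexity and the $p$-growth from below) with the energy convergence above and the strict convexity of $f$ on the Dirichlet class. A final extraction yields $Du_h \rightharpoonup^{\ast} D\bar u$ in $L^\infty(B_\rho)$, and the weak-$*$ lower semicontinuity of the $L^\infty$-norm delivers \eqref{lipestimate}.

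The point I expect to be most delicate is the identification $u^\star = \bar u$, because one must reconcile the standard weak lower semicontinuity (which uses only $p$-growth from below) with the fact that $f_h$ and $f$ may differ substantially where $|Du_h|$ is large. I anticipate that a truncation argument exploiting the uniform convergence $f_h \to f$ on compacta in $\xi$, together with assumption (H5) to control the $x$-oscillation of $f$ on small balls, is the natural tool to close this step; the remainder is standard functional-analytic manipulation.
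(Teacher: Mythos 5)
Your proposal follows the same scheme as the paper's own proof of Theorem \ref{thm2}: solve the Dirichlet problems for the approximating integrands $f_h$ on $B_R$ with boundary datum $\bar u$, apply \cite[Theorem 5.1]{el-mar-mas} to get an $h$-uniform Lipschitz bound, transfer the energy bound via minimality and \eqref{lim}, extract a weak-$*$ limit $\bar v$ in $W^{1,\infty}_{\mathrm{loc}}$, identify $\bar v=\bar u$ by lower semicontinuity plus uniform convergence $f_h\to f$ on the compact range of $Dv^h$ plus strict convexity of $f$, and conclude by weak-$*$ lower semicontinuity of the $L^\infty$-norm. The one small inaccuracy is that assumption (H5) is not re-invoked at the identification step — it is already consumed in the proof of \eqref{lim} — and the reconciliation you flag is resolved purely by the uniform $L^\infty(B_\rho)$ bound on $Dv^h$ together with uniform convergence on compacta, exactly as you anticipate.
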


The previous theorem allows us to prove the Lipschitz continuity of local minimizers of \eqref{functional} assuming strict convexity and growth conditions on $f=f(x,\xi)$ only for large values of $|\xi|$. Precisely, we assume that $f$
satisfies (H1) and the assumptions:
     $$  m |\xi|^{p-2}|\lambda|^2 \le \sum_{i,j=1}^n f_{\xi_i\xi_j}(x,\xi)\lambda_i \lambda_j \leqno{\rm{{(H2)^*}}}$$
 $$|f_{\xi\xi}(x,\xi)| \le M |\xi|^{q-2} \leqno{\rm{{(H3)^*}}}$$
    $$|f_{\xi x}(x,\xi)| \le K |\xi|^{q-1} \leqno{\rm{{(H4)^*}}}$$
    $$|f(x,\xi)-f(y,\xi)|\le H |x-y| |\xi|^q. \leqno{\rm{{(H6)^*}}}$$
for every $x,y \in \Omega$ and for all $\xi,\lambda \in \mathbb{R}^n$, with $|\xi| \ge 1$ and for positive constants $m, M,K,H$.
We have the following regularity result.
\begin{thm}\label{thm3}
    Let $f=f(x,\xi)$ satisfy (H1), $\text{(H2)}^*-\text{(H4)}^*$, (H5) and $\text{(H6)}^*$ with exponents $1 < p < q$ such that \eqref{gap} is in force. Then, every local minimizer $\bar{u} \in W^{1,p}(\Omega)$ of \eqref{functional}
 is locally Lipschitz continuous. 
 \end{thm}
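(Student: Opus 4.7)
The strategy is to reduce Theorem \ref{thm3} to Theorem \ref{thm2} by approximation. Since (H2)$^*$--(H4)$^*$ and (H6)$^*$ control $f$ only where $|\xi|\geq 1$, I would construct a sequence of smooth, strictly convex modifications $\tilde{f}_\varepsilon$ of $f$ satisfying the full set (H1)--(H6) on $\Omega\times\mathbb{R}^n$ with constants uniform in $\varepsilon$ and with $\tilde{f}_\varepsilon\to f$ in an appropriate sense, and then transfer the Lipschitz estimate of Theorem \ref{thm2} from the minimizers $v_\varepsilon$ of $\tilde{f}_\varepsilon$ on balls to $\bar{u}$ itself.

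A natural construction glues two pieces: on $\{|\xi|\geq 1\}$ one keeps $f$, and on $\{|\xi|\leq 1/2\}$ one replaces it with a fixed, $x$-independent, $C^2$ and uniformly convex function of $p$-growth, for instance a suitable multiple of $(1+|\xi|^2)^{p/2}$. A smooth interpolation in the transition annulus, with coefficients chosen large enough, preserves strict convexity throughout. The $x$-independence on the small-gradient region is essential, because it automatically secures (H4) and (H6) in the zone where their starred versions fail, whereas on $\{|\xi|\geq 1\}$ the starred hypotheses themselves deliver (H2)--(H4) and (H6). The parameter $\varepsilon$ is then used to localize the replacement on a shrinking neighbourhood of the origin, so that $\tilde{f}_\varepsilon\to f$ pointwise while $m,M,K,H$ stay bounded in $\varepsilon$. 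On a ball $B_R\Subset\Omega$, let $v_\varepsilon\in\bar{u}+W^{1,p}_0(B_R)$ be the unique minimizer of $\int_{B_R}\tilde{f}_\varepsilon(x,Dv)\,dx$; Theorem \ref{thm2} applied to $\tilde{f}_\varepsilon$ yields
\[
\|Dv_\varepsilon\|_{L^\infty(B_\rho)}\;\leq\; C\!\left(1+\int_{B_R}\tilde{f}_\varepsilon(x,Dv_\varepsilon)\,dx\right)^{\sigma/p},
\]
with $C,\sigma$ independent of $\varepsilon$, while minimality and the bound $\tilde{f}_\varepsilon(x,\xi)\leq f(x,\xi)+c$ keep the right-hand side uniformly bounded.

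Passing to the limit, $v_\varepsilon\stackrel{*}{\rightharpoonup}w$ in $W^{1,\infty}(B_\rho)$ and weakly in $W^{1,p}(B_R)$. Combining weak lower semicontinuity of $\mathcal{F}$ with the minimality $\int_{B_R}\tilde{f}_\varepsilon(x,Dv_\varepsilon)\,dx\leq\int_{B_R}\tilde{f}_\varepsilon(x,D\bar{u})\,dx$ and the convergence of the right-hand side to $\mathcal{F}(\bar{u};B_R)$ gives $\mathcal{F}(w;B_R)\leq\mathcal{F}(\bar{u};B_R)$; since $w-\bar{u}\in W^{1,p}_0(B_R)$ and $\bar{u}$ is a local minimizer of $\mathcal{F}$, the strict convexity of $f$ forces $w=\bar{u}$, and weak-$\ast$ lower semicontinuity of $\|\cdot\|_{L^\infty}$ delivers the Lipschitz bound for $\bar{u}$. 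The main obstacle is the construction of $\tilde{f}_\varepsilon$: the $p$-uniform convexity at small $|\xi|$ must come from the fixed structural piece (an $\varepsilon$-vanishing correction would blow up the constant $C$ in \eqref{lipestimate}), while at the same time $\tilde{f}_\varepsilon\to f$ must be strong enough to identify the weak-$\ast$ limit with $\bar{u}$; verifying strict convexity and the $q$-growth of the Hessian across the transition annulus is the most technical point.
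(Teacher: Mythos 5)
Your final step is wrong, and it is precisely the step that motivates Section~\ref{sec5}. Under the hypotheses of Theorem~\ref{thm3}, $f$ is \emph{not} assumed strictly convex on all of $\mathbb{R}^n$: $(\mathrm{H2})^*$ gives strict convexity only for $|\xi|\ge 1$ (hence the section title ``strict convexity only at infinity''). After you obtain $\mathcal{F}(w;B_R)\le\mathcal{F}(\bar{u};B_R)$ with $w-\bar{u}\in W^{1,p}_0(B_R)$, you invoke ``strict convexity of $f$'' to force $w=\bar{u}$, but that inference is simply not available: on the region where $|Dw|$ and $|D\bar{u}|$ are small, $f(x,\cdot)$ may be affine and both $w$ and $\bar{u}$ can be distinct minimizers. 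The paper anticipates exactly this and proceeds differently: it shows that $w$ (their $\bar v$) and $\bar u$ are both minimizers of $\mathcal{F}$ on $B_R$ with the same boundary data, then considers the set $\Sigma=\{x\in B_R:\ |\tfrac12(D\bar u+D\bar v)|>1\}$. On $\Sigma$, the strict convexity for $|\xi|\ge1$ yields a strict Jensen-type inequality for the midpoint; combined with plain convexity off $\Sigma$, positive measure of $\Sigma$ would contradict minimality. Hence $|\Sigma|=0$, which gives $|D\bar u|\le 2+|D\bar v|$ a.e.\ and transfers the $L^\infty$ bound from $\bar v$ to $\bar u$. You need something like this $\Sigma$-argument; without it the proof does not close.

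A secondary point is that your approximating family is unnecessarily delicate. You propose a gluing construction $\tilde f_\varepsilon$ that replaces $f$ by an $x$-independent $p$-uniformly convex function on a shrinking neighbourhood of $\xi=0$, and you correctly flag the transition annulus as ``the most technical point'' --- but you do not resolve it, and it is genuinely problematic: to have $\tilde f_\varepsilon\to f$ pointwise you must let the replacement region shrink, yet keeping $(\mathrm{H2})$--$(\mathrm{H4})$ uniform across a shrinking transition annulus (where you must bridge a possibly $O(1)$ mismatch in values and derivatives over an $O(\varepsilon)$ gap) is not obviously possible. The paper sidesteps all of this with the additive perturbation $f_k(x,\xi)=f(x,\xi)+\tfrac1k(1+|\xi|^2)^{q/2}$, which trivially inherits $(\mathrm{H1})$--$(\mathrm{H6})$ with uniform constants, is strictly convex everywhere, and has standard $q$-growth so that Theorem~\ref{thm2} applies with $p=q$. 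It also mollifies the boundary datum $\bar u\mapsto\bar u_\varepsilon$ to make the approximating problems coercive in $W^{1,q}$, then sends $k\to\infty$ and $\varepsilon\to0$. Your overall scheme (approximate, apply Theorem~\ref{thm2}, pass to the limit, identify the limit) is the right skeleton, but you need the additive regularisation in place of the gluing, and you must replace the ``strict convexity $\Rightarrow w=\bar u$'' step by the measure-theoretic $\Sigma$-argument.
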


 \section{Notation and preliminary results}
 In this section, we introduce some notations and collect several results that we shall use to establish our main theorems.

We will follow  denote by $c$ or $C$ a general constant that may vary on different occasions, even within the same line of estimates. Relevant dependencies on parameters and special constants will be suitably emphasized using parentheses or subscripts.

In what follows, $B(x,r)=B_{r}(x)= \{ y \in \mathbb{R}^{n} : |y-x | < r  \}$ will denote the ball centered at $x$ of radius $r$. We shall omit the dependence on the center and on the radius when no confusion arises.

The next proposition shows that assumption (H4) implies the Lipschitz continuity of the partial map $x \mapsto f(x,\xi)$ up to an additive function $h=h(x)$.
\begin{prop}\label{proplip}
Let $f=f(x,\xi)$ satisfy assumption (H4). Then, $f(x,\xi)=g(x,\xi)+f(x,0)$, where the function $g=g(x,\xi)$ verifies (H6).
\end{prop}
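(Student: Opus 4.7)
My plan is to set $g(x,\xi) := f(x,\xi) - f(x,0)$, which makes the additive decomposition in the statement tautological and ensures $g(x,0) \equiv 0$. The task then reduces to verifying the Lipschitz-type bound (H6) for this $g$.

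The main idea is to express the difference $g(x,\xi) - g(y,\xi)$ via two chained fundamental-theorem-of-calculus identities: one along the segment $s \mapsto s\xi$ in the gradient variable (so as to reduce matters to $f_\xi$, on which (H4) gives pointwise control), and one along the segment $t \mapsto y + t(x-y)$ in the spatial variable (so as to bring in $f_{\xi x}$). Since (H1) yields $\xi \mapsto f(x,\xi) \in C^2$ and (H4) presupposes the existence of $f_{\xi x}$ with a polynomial bound, both integrations are legitimate. Using $g_\xi = f_\xi$ and $g(\cdot,0) = 0$, for $x,y$ in a convex subdomain of $\Omega$ this gives
\begin{equation*}
g(x,\xi) - g(y,\xi) = \int_0^1 \!\! \int_0^1 \bigl\langle f_{\xi x}\bigl(y + t(x-y),\, s\xi\bigr)(x-y),\, \xi \bigr\rangle\, dt\, ds.
\end{equation*}

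Plugging in the bound from (H4), then using $s^2|\xi|^2 \le |\xi|^2$ and $|\xi| \le (1+|\xi|^2)^{1/2}$, I would obtain
\begin{equation*}
|g(x,\xi) - g(y,\xi)| \le K\,|x-y|\,|\xi|\,(1+|\xi|^2)^{\frac{q-1}{2}} \le K\,|x-y|\,(1+|\xi|^2)^{\frac{q}{2}},
\end{equation*}
which is (H6) with $H = K$.

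The only subtle point is that the line-segment representation of $f_\xi(x,s\xi) - f_\xi(y,s\xi)$ requires the segment $[y,x]$ to lie inside $\Omega$. This is not a genuine obstacle: the regularity results of the paper are local (formulated on balls $B_R \Subset \Omega$), and the estimate above is really a pointwise bound on $\partial_x g(\cdot,\xi)$ which transfers to a Lipschitz estimate on any convex subdomain, which is all that is ever needed in the subsequent arguments.
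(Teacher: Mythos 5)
Your proof is correct, and it takes a genuinely different and cleaner route than the paper's. The paper derives the Lipschitz bound by a coordinate-by-coordinate telescoping: it first uses (H4) to control $f_{\xi_1}(x,\xi)-f_{\xi_1}(y,\xi)$, integrates in the $\xi_1$-variable from $0$ to $\xi_1$ to control $g_0(x,\xi):=f(x,\xi)-f(x,0,\xi_2,\dots,\xi_n)$, observes that $a_0(x,\eta):=f(x,0,\eta)$ again satisfies (H4) in $\mathbb{R}^{n-1}$, and iterates $n$ times, finally summing the $n$ pieces $g_i$ to reconstruct $g=f(x,\xi)-f(x,0)$. Your argument collapses this induction into a single identity
\begin{equation*}
g(x,\xi)-g(y,\xi)=\int_0^1\!\!\int_0^1\bigl\langle f_{\xi x}\bigl(y+t(x-y),\,s\xi\bigr)(x-y),\,\xi\bigr\rangle\,dt\,ds,
\end{equation*}
which is just the mixed second difference of $(t,s)\mapsto f(y+t(x-y),s\xi)$, and then bounds the integrand directly via (H4) using $s\le 1$ and $|\xi|\le(1+|\xi|^2)^{1/2}$. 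This yields the same constant ($H=K$), avoids the decomposition into $n$ summands, and makes the role of (H4) transparent in one stroke. Your caveat about the segment $[y,x]$ lying in $\Omega$ is well taken, but it is not a defect specific to your argument: the paper's proof uses the very same implicit segment integration when it passes from the bound on $f_{\xi x}$ to the bound on $|f_\xi(x,\xi)-f_\xi(y,\xi)|$, and as you note, all subsequent uses of (H6) are local on balls $B_R\Subset\Omega$, so the estimate on convex subdomains is all that is needed.
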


\begin{proof} For the sake of simplicity, for $i=0,...,n-1$ and for every $\xi \in \mathbb{R}^n$, we will denote by $\xi_{-i}=(\xi_{i+1},...,\xi_n) \in \mathbb{R}^{n-i}$.
    From assumption (H4), we have for all $x,y \in \Omega$ and $\xi \in \mathbb{R}^n$
    $$|f_\xi(x,\xi)-f_\xi(y,\xi)|\le c|x-y| (1+|\xi|^2)^{\frac{q-1}{2}}$$
    which in particular yields
    $$|f_{\xi_1}(x,\xi)-f_{\xi_1}(y,\xi)|\le c|x-y| (1+|\xi|^2)^{\frac{q-1}{2}}.$$
    Integrating with respect to $\xi_1$, we get
    \begin{align*}
        &\left|\int_0^{\xi_1}  |f_{\xi_1}(x,t,\xi_2,...,\xi_n)-f_{\xi_1}(y,t,\xi_2,...,\xi_n)|dt  \right| \\
        & \,\,\,\,\,\,\,\,\,\, \le c|x-y| \left|   \int_0^{\xi_1} \left(1+|t|^2+\sum_{i=2}^n|\xi_i|^2\right)^{\frac{q-1}{2}} dt \right|\\
        & \,\,\,\,\,\,\,\,\,\, \le c|x-y| (1+|\xi|^2)^{\frac{q}{2}}.
    \end{align*}
This implies
$$ |g_0(x,\xi)-g_0(y,\xi)| \le c|x-y| (1+|\xi|^2)^{\frac{q}{2}}, $$
for all $x,y \in \Omega$ and all $\xi \in\mathbb{R}^n$, where we set $g_0(x,\xi):= f(x,\xi)-f(x,0,\xi_2,...,\xi_n)$.
\\
\noindent Now, note that $a_0:\Omega \times \mathbb{R}^{n-1} \to [0,+\infty)$, $a_0(x,\eta):= f(x,0,\eta_1,...,\eta_{n-1})$, $\eta =(\eta_1,...,\eta_{n-1})$, satisfies assumption (H4). Hence, arguing as above, we infer
$$ |g_1(x,\eta)-g_1(y,\eta)| \le c|x-y| (1+|\eta|^2)^{\frac{q}{2}}, $$
for all $x,y \in \Omega$ and all $\eta \in \mathbb{R}^{n-1}$, where we define $g_1(x,\eta):= a_0(x,\eta)-a_0(x,0,\eta_2,...,\eta_{n-1})$. At this stage, we can write for all $x \in \Omega$ and every $\xi \in \mathbb{R}^n$,
$$f(x,\xi)= g_0(x,\xi)+g_1(x,\xi_{-1})+a_1(x,\xi_{-2}),$$
where the functions $g_0$ and $g_1$ satisfy assumption (H4) and we set $a_1(x,\xi_{-2}):= f(x,0,0,\xi_3,...,\xi_{n})$. 
Finally, we find that
$$f(x,\xi)= \sum_{i=0}^{n-1}g_i(x,\xi_{-i})+f(x,0),$$
where the functions $g_i$ satisfy assumption (H4).
\end{proof}

{
If $f=f(x,\xi)$ satisfies assumption (H4) only for large values of $|\xi|$, we have the following
\begin{prop}
   Let $f=f(x,\xi)$ satisfy assumption $(H4)^*$. 
Then, $f(x,\xi)=\tilde{g}(x,\xi)+l(x)$, for a function $\tilde{g}=\tilde{g}(x,\xi)$ verifying $(H6)^*$.   
\end{prop}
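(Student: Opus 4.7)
The plan is to adapt the argument of Proposition \ref{proplip} with a careful choice of decomposition and of the integration path in the $\xi$-variable. I would set
\[
l(x) := f(x,e_1), \qquad \tilde g(x,\xi) := f(x,\xi) - f(x,e_1),
\]
where $e_1$ denotes the first vector of the standard basis (any fixed unit vector would do). The equality $f = \tilde g + l$ is then trivial, so the whole content of the statement reduces to verifying $(H6)^*$ for $\tilde g$.

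First, from $(H4)^*$, the mean value theorem applied along the segment from $y$ to $x$ (which lies in $\Omega$ by local convexity, as implicitly used in Proposition \ref{proplip}) yields, for every $\eta$ with $|\eta|\ge 1$,
\[
|f_\xi(x,\eta) - f_\xi(y,\eta)| \le K\,|x-y|\,|\eta|^{q-1}.
\]
Then, given $\xi$ with $|\xi|\ge 1$, I would connect $e_1$ to $\xi$ by a piecewise smooth curve $\gamma$ lying entirely in $\{|\eta|\ge 1\}$, namely the concatenation of the geodesic arc on the unit sphere from $e_1$ to $\hat\xi := \xi/|\xi|$ (length at most $\pi$, with $|\eta|\equiv 1$) with the radial segment $s\mapsto s\hat\xi$ for $s\in[1,|\xi|]$. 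By the fundamental theorem of calculus in $\eta$,
\[
\tilde g(x,\xi) - \tilde g(y,\xi) = \int_\gamma \bigl[f_\xi(x,\eta) - f_\xi(y,\eta)\bigr]\cdot d\eta,
\]
so the pointwise estimate above yields
\[
|\tilde g(x,\xi) - \tilde g(y,\xi)| \le K\,|x-y|\left(\pi + \int_1^{|\xi|} s^{q-1}\, ds\right) \le C(K,q)\,|x-y|\,|\xi|^q,
\]
where we use $|\xi|\ge 1$ to absorb $\pi$ into $|\xi|^q$. This is precisely $(H6)^*$ with $H := C(K,q)$.

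The only real obstacle, compared with Proposition \ref{proplip}, is the choice of the integration path: since $(H4)^*$ provides no information on $\{|\eta|<1\}$, integration from the origin (or along the straight segment from $e_1$ to $\xi$, which may dip into the open unit ball whenever $\xi_1<1$) is unavailable. Routing through the unit sphere before moving radially outward bypasses this issue, and the degenerate configuration $\hat\xi=-e_1$, in which the geodesic arc is not uniquely determined, is harmless since any such arc has length $\pi$ and $|\eta|\equiv 1$ along it.
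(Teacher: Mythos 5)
Your proposal is correct, and it takes a genuinely different (and arguably cleaner) route than the paper. The paper reduces the statement to Proposition \ref{proplip}: it first controls the one-dimensional increment $d(x,\xi)=f(x,\xi)-f(x,1,\xi_2,\dots,\xi_n)$ by integrating $f_{\xi_1}$ in $\xi_1$ starting from $1$, then observes that $\tilde f(x,\eta):=f(x,1,\eta)$ satisfies the full hypothesis (H4) (since $|(1,\eta)|\ge 1$ always) and applies Proposition \ref{proplip} to $\tilde f$; recombining yields the decomposition, and in fact the paper's construction produces the same $l(x)=f(x,e_1)$ as yours. You instead estimate $\tilde g(x,\xi)-\tilde g(y,\xi)$ directly by a single curve integral of $f_\xi(x,\cdot)-f_\xi(y,\cdot)$ along a sphere-then-radial path from $e_1$ to $\xi$. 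The concrete advantage of your route is precisely the point you flagged: the coordinate segment $t\mapsto(t,\xi_2,\dots,\xi_n)$, $t$ between $1$ and $\xi_1$, used in the paper's first integration can leave $\{|\eta|\ge 1\}$ (for instance when $\xi=(-2,0,\dots,0)$), where $(H4)^*$ gives no control on $f_{\xi x}$; your path is engineered to stay in the region where the hypothesis applies, so the argument closes without any implicit extension of $(H4)^*$ into the unit ball. Both arguments (yours included) rely on joining $x$ and $y$ by a segment inside $\Omega$ when passing from the pointwise bound on $f_{\xi x}$ to the Lipschitz estimate on $f_\xi(\cdot,\eta)$, but that is already an implicit step in Proposition \ref{proplip} and is not a new issue you introduced.
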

\begin{proof}
  Assumption $(H4)^*$ implies that for all $x,y \in \Omega$ and every $|\xi| \ge 1$
    $$|f_{\xi_1}(x,\xi)-f_{\xi_1}(y,\xi)|\le c|x-y| (1+|\xi|^2)^{\frac{q-1}{2}}.$$ 
     Integrating with respect to $\xi_1$, we get
    \begin{align*}
        &\left|\int_1^{\xi_1}  |f_{\xi_1}(x,t,\xi_2,...,\xi_n)-f_{\xi_1}(y,t,\xi_2,...,\xi_n)|dt  \right| \\
        & \,\,\,\,\,\,\,\,\,\, \le c|x-y| \left|   \int_1^{\xi_1} \left(1+|t|^2+\sum_{i=2}^n|\xi_i|^2\right)^{\frac{q-1}{2}} dt \right|\\
        & \,\,\,\,\,\,\,\,\,\, \le \tilde{c}|x-y| (1+|\xi|^2)^{\frac{q}{2}}.
    \end{align*}
This implies
\begin{equation}\label{H4star}
    |d(x,\xi)-d(y,\xi)| \le \tilde{c}|x-y| (1+|\xi|^2)^{\frac{q}{2}},
\end{equation}
for all $x,y \in \Omega$ and all $|\xi| \ge 1 $, where we set $d(x,\xi):= f(x,\xi)-f(x,1,\xi_2,...,\xi_n)$. 
\\
Now, we observe that $\tilde{f}(x,\eta):= f(x,1,\eta_1,...,\eta_{n-1})$, $\eta =(\eta_1,...,\eta_{n-1}) \in \mathbb{R}^{n-1}$, satisfies assumption (H4). Therefore, Proposition \ref{proplip} applied to the function $\tilde{f}$, together with \eqref{H4star}, gives the conclusion.
\end{proof}
}

{
\begin{rmk}\label{rmk1}
    By assumptions (H2) and (H6) there exist positive constants $c$, $c_\Omega$ such that for all $x \in \Omega$ and for every $\xi \in \mathbb{R}^n$
    $$f(x,\xi) \ge c |\xi|^p-c_\Omega.  $$
    Indeed, for any $\xi, \eta \in \mathbb{R}^n$, using Taylor's formula, assumption (H2) and \cite[Lemma 8.3]{Giusti}, we have
    \begin{align*}
        f(x,\xi)  \ge & \ f \left( x, \dfrac{\xi+\eta}{2} \right) + \dfrac{1}{2}\langle f_\xi(x,\xi), \xi-\eta \rangle \\
        & \ + c \left| \dfrac{\xi-\eta}{2} \right|^2 \int_0^1 (1-t) \left( 1+ \left| \dfrac{\xi+\eta}{2} +t \left( \dfrac{\xi-\eta}{2} \right) \right|^2 \right)^\frac{p-2}{2} dt\\
        \ge & \ f \left( x, \dfrac{\xi+\eta}{2} \right) + \dfrac{1}{2}\langle f_\xi(x,\xi), \xi-\eta \rangle \\
         & \ + c \left| \dfrac{\xi-\eta}{2} \right|^2  \left( 1+ \left| \dfrac{\xi+\eta}{2}\right|^2 +\left|  \dfrac{\xi-\eta}{2}  \right|^2 \right)^\frac{p-2}{2} .
    \end{align*}
    Similarly, we derive
    \begin{align*}
        f(x,\eta)  \ge & \ f \left( x, \dfrac{\xi+\eta}{2} \right) + \dfrac{1}{2}\langle f_\xi(x,\xi), \eta-\xi \rangle \\
         & \ + c \left| \dfrac{\xi-\eta}{2} \right|^2  \left( 1+ \left| \dfrac{\xi+\eta}{2}\right|^2 +\left|  \dfrac{\xi-\eta}{2}  \right|^2 \right)^\frac{p-2}{2} .
    \end{align*}
    Summing up the previous inequalities, we get
    \begin{align}
        f(x,\xi) +f(x,\eta) \ge & \ 2 f \left( x, \dfrac{\xi+\eta}{2} \right) + \dfrac{1}{2}\langle f_\xi(x,\xi)-f_\xi(x,\eta), \xi-\eta \rangle \notag \\
         & \ + c \left| \dfrac{\xi-\eta}{2} \right|^2  \left( 1+ \left| \dfrac{\xi+\eta}{2}\right|^2 +\left|  \dfrac{\xi-\eta}{2}  \right|^2 \right)^\frac{p-2}{2} . \label{pqg}
    \end{align}
    Notice that (H2) implies
    $$\langle f_\xi(x,\xi)-f_\xi(x,\eta), \xi-\eta \rangle \ge c (1+|\xi|^2+|\eta|^2)^\frac{p-2}{2}|\xi-\eta|^2$$
    for any $\xi,\eta \in \mathbb{R}^n$ and for all $x\in \Omega$. Therefore, the second term in the right-hand side of \eqref{pqg} can be discarded, since it is non-negative. Now, choosing $\eta=0$ and recalling that $f$ is non-negative, we find that
    $$f(x,\xi)+f(x,0) \ge c |\xi|^p.$$
    On the other hand, for a fixed $y \in \Omega$, by assumption (H6), we have that for all $x \in \Omega$
    $$f(x,0) \le f(y,0)+ H |x-y| \le f(y,0)+ H \rm{diam}(\Omega) =:c_\Omega< + \infty,$$
    since $\Omega $ is bounded. 
\end{rmk}
}

% It is interesting to observe that for the minimizers of integral vectorial functionals the approximation  can be in some sense less complicated.
% It depends on the fact that there is a gap in the regularity scale for for the minimizers of integral vectorial functionals. It is well known that none of the new proofs given of the De Giorgi's result for the scalar case can be easily extended to cover the case of vector-valued functional and De Giorgi in 1968 gives 
% a counterexample which proves that his Holder regularity result cannot extended to systems; see also Giusti-Miranda 1968, \emph{Necas 1975, Sver\'ak-Yan 2000}  and more recently Mooney-Savin 2016. The everywhere regularity in the vector-valued case needs  additional structure assumptions:  $f(x.Du)=g(x,|Du|)$: Uhlenbeck 1975, Di Benedetto 1983, Tolksdorf 1983, Giaquinta-Modica 1986, Acerbi-Fusco  1989 in the subquadratic case.

% Therefore, the second approximation step of the previous regularity scheme is related with functions $g=g(x,t)$, with $ t \in R$: see Cupini-Guidorzi-Mascolo 2003 Marcellini-Papi 2006, De Marco-Marcellini, Eleuteri-Marcellini-Mascolo       , De Fillipis Mingione    ........

The special structure of the approximating energy densities in \eqref{appf} allows us to   use  the following  result (\cite[Theorem 5.1]{el-mar-mas}) that implies  the Lipschitz regularity of the approximating minimizers.

\begin{thm}\label{thmEMM}
    Let
    $$b(x,\xi)= \sum_{i=1}^N a_i(x) g_i(\xi),$$
    where $a_i(x)>0$ in $\Omega$, $a_i \in W^{1,r}(\Omega)$, $r >n$, and $g_i : \mathbb{R}^n \to [0,+\infty)$ is strictly convex in $\xi$. Moreover, assume that $b=b(x,\xi)$ satisfies (H1)--(H3) and that (H4) holds with $K(x)=\sum_{i=1}^N|D_x a_i| \in L^r(\Omega)$ and 
    \begin{equation}\label{gapEMM}
     \dfrac{q}{p} < 1+\dfrac{1}{n}-\dfrac{1}{r}.   
    \end{equation}
    Then, every local minimizer $\bar{v} \in W^{1,p}(\Omega)$ of the integral functional
    $$\int_{\Omega}b(x,Dv)dx= \sum_{i=1}^N\int_{\Omega}a_i(x)g_i(Dv)dx$$
    is locally Lipschitz continuous and the following estimate holds
    \begin{equation}\label{lipest}
        \Vert D \bar{v} \Vert_{L^\infty(B_\rho)} \le C \Vert 1+ K \Vert_{L^r(B_R)}^{\beta \gamma} \left[ 1+ \int_{B_R} b(x,D \bar{v})dx \right]^\frac{\gamma}{p}
    \end{equation}
    for every concentric balls $B_\rho \subset B_R \Subset \Omega$, where $C$, $\beta$ and $\gamma$ are positive constants depending at most on $n,r,p,q,m,M,\rho$ and $R$.
\end{thm}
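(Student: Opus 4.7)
The plan is to prove Theorem \ref{thmEMM} by the standard three-step scheme for $(p,q)$-growth problems: derive an a priori Lipschitz estimate for a regularized problem with a smooth minimizer, construct the regularization so that its minimizer is indeed smooth, then pass to the limit preserving the estimate. The special structure $b(x,\xi)=\sum a_i(x)g_i(\xi)$ is exploited through the identity $b_{\xi x}(x,\xi)=\sum Da_i(x)\,Dg_i(\xi)$, which packages all $x$-dependence of the derivatives into the single scalar function $K(x)$.

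For the a priori estimate, I would work with a regularized integrand $b_\varepsilon(x,\xi)=\sum (a_i\ast \rho_\varepsilon)(x)g_i(\xi)+\varepsilon(1+|\xi|^2)^{q/2}$, whose minimizers $v_\varepsilon$ satisfy standard growth and are smooth by classical theory. Differentiating the Euler--Lagrange equation with respect to $x_s$ (via difference quotients) and testing with $\varphi=\eta^{2q}(v_\varepsilon)_{x_s}(1+|Dv_\varepsilon|^2)^{(\mu-p)/2}$ for $\eta\in C_c^\infty(B_R)$ and parameter $\mu\ge p$, I obtain, after summation over $s$ and standard manipulations, a Caccioppoli-type inequality of the form
$$\int_\Omega \eta^{2q}(1+|Dv_\varepsilon|^2)^{(p+\mu-4)/2}|D^2 v_\varepsilon|^2\,dx\le C\int_\Omega\bigl(|D\eta|^2+\eta^{2q}(1+K^2)\bigr)(1+|Dv_\varepsilon|^2)^{(q+\mu)/2}dx,$$
where the $\eta^{2q}$ weight is needed to absorb the $q$-growth of $b_{\xi\xi}$. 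Rewriting the left-hand side as $\int |D[\eta^q(1+|Dv_\varepsilon|^2)^{(p+\mu)/4}]|^2$ (up to lower-order terms) and applying the Sobolev embedding in $L^{2^*}$, followed by H\"older's inequality to split $\int \eta^{2q} K^2 (1+|Dv_\varepsilon|)^{q+\mu}$ using $K\in L^r$, produces a reverse-H\"older inequality for $(1+|Dv_\varepsilon|)^{(p+\mu)/2}$. The gain in integrability is $\frac{2^*}{2}\cdot\frac{p+\mu}{q+\mu}$, and this exceeds $1$ precisely when $q/p<1+1/n-1/r$, the assumed gap. A Moser iteration on $\mu$ then yields
$$\Vert 1+|Dv_\varepsilon|\Vert_{L^\infty(B_\rho)}^p\le C\,\Vert 1+K\Vert_{L^r(B_R)}^{\beta\gamma}\Bigl[1+\int_{B_R}b_\varepsilon(x,Dv_\varepsilon)\,dx\Bigr]^\gamma,$$
with constants independent of $\varepsilon$.

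The main obstacle is threefold: first, the gap condition has to be tracked sharply through the iteration, so the loss $1/r$ from H\"older on $K$ must be combined correctly with the Sobolev loss $1/n$ at each step; second, the starting step of the iteration requires a preliminary higher integrability result (e.g.\ $|Dv_\varepsilon|\in L^{p\theta}$ for some $\theta>1$), which itself follows from a simpler Caccioppoli of the second kind combined with the Sobolev embedding; third, one must keep all constants genuinely independent of $\varepsilon$, which is where the sum-structure is used: the derivatives $D(a_i\ast\rho_\varepsilon)$ are uniformly controlled in $L^r$ by $K$, so the analogue of $K_\varepsilon$ for $b_\varepsilon$ satisfies $\Vert K_\varepsilon\Vert_{L^r}\le \Vert K\Vert_{L^r}$.

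Finally, using strict convexity of the $g_i$ and uniqueness of the minimizer of the regularized problem with the Dirichlet datum of $\bar v$, together with weak-$\ast$ compactness in $W^{1,\infty}_{\rm loc}$ coming from the uniform a priori bound, one shows $v_\varepsilon\rightharpoonup \bar v$ in $W^{1,p}(B_R)$ and $\int b_\varepsilon(x,Dv_\varepsilon)\to \int b(x,D\bar v)$. Fatou's lemma applied to the a priori estimate then delivers \eqref{lipest} for $\bar v$ itself.
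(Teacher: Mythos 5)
The paper does not supply a proof of Theorem \ref{thmEMM}: it is imported verbatim as \cite[Theorem 5.1]{el-mar-mas} and used as a black box in the proof of Theorem \ref{thm2}. There is therefore no ``paper's own proof'' to compare against; what can be said is whether your sketch is a credible reconstruction of the argument in the cited reference, and it is. The overall architecture --- regularize so as to obtain a smooth $q$-growth minimizer, differentiate the Euler--Lagrange equation by difference quotients, test with $\eta^{\,\cdot}\,(v_\varepsilon)_{x_s}(1+|Dv_\varepsilon|^2)^{(\mu-p)/2}$, pass through Sobolev and H\"older (splitting off the coefficient $K\in L^r$) to a reverse-H\"older inequality, run Moser iteration, and close by weak-$\ast$ compactness plus lower semicontinuity of the $L^\infty$ norm --- is exactly the standard route for this class of results, and the observation that $b_{\xi x}=\sum Da_i\,Dg_i$ forces all the $x$-roughness into the single scalar $K=\sum|Da_i|$ (preserved under mollification, $\|D(a_i\ast\rho_\varepsilon)\|_{L^r}\le\|Da_i\|_{L^r}$) is precisely the point of the sum structure in \cite{el-mar-mas}.

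A few details in your sketch deserve care before this could be called a proof. First, the Caccioppoli exponent bookkeeping is slightly off: with test function $\eta^{2\alpha}(v_\varepsilon)_{x_s}(1+|Dv_\varepsilon|^2)^{(\mu-p)/2}$ the coercive term is $\int\eta^{2\alpha}(1+|Dv_\varepsilon|^2)^{(\mu-2)/2}|D^2v_\varepsilon|^2$, and rewriting it as a full gradient of $\eta^\alpha(1+|Dv_\varepsilon|^2)^{\mu/4}$ requires the exponent $(\mu-2)/2$, not $(p+\mu-4)/2$; it is worth normalizing $\mu$ consistently and re-deriving the iteration exponents, since the gap condition \eqref{gapEMM} is exactly the threshold at which the first iteration step gains and a sign error there would be fatal. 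Second, the limit passage needs more than you wrote: $\bar v$ is only in $W^{1,p}$ while the regularized problem lives in $W^{1,q}$, so the Dirichlet datum must also be mollified, and convergence of the energies $\int b_\varepsilon(x,Dv_\varepsilon)\to\int b(x,D\bar v)$ requires a minimality comparison combined with semicontinuity on both sides --- this is not a direct consequence of Fatou, which controls the $L^\infty$ bound but not the energy identity. Third, the preliminary higher integrability that seeds the iteration is genuinely needed (as you note), and you should say how to get it; in \cite{el-mar-mas} this follows from a first application of the same Caccioppoli with $\mu=p$, which is legitimate because the regularized minimizer already lies in $W^{1,q}$. None of these are conceptual gaps, but they are the places where the proof is actually delicate.
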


\section{An Approximation Result}\label{appsec}
This section is devoted to the construction of the approximating sequence of functionals under $(p,q)$-growth, with a procedure inspired by \cite{fonsecafusco-marcellini} in the case of standard growth.
%\textcolor{green}{In cosa siamo diversi da loro???}

\begin{proof}[Proof of Theorem \ref{thm1}]
Let $B_{R} \Subset \Omega$ be a ball and let $\psi \in \mathcal{C}^\infty_0(\mathbb{R}^n)$ be a cut-off function such that $0 \le \psi(x) \le 1$ for all $x$, $\text{supp } \psi \subset (-3,3)^n$ and such that $\psi(x) \equiv 1$ if $x \in [-1,1]^n$. For any $h \in \mathbb{N}$ we denote by $Q_{i,h}(x_{i,h})$ the standard covering of $\mathbb{R}^n$ with
closed cubes, centered at $x_{i,h}$, with sides parallel to the coordinates axes, side length equal to $2/h$ and having
pairwise disjoint interiors. Then, for any $i, h$, we set $$\psi_{i,h}(x):= \psi(h(x-x_{i,h})) $$
and
$$\sigma_h(x):= \sum_{i=1}^\infty \psi_{i,h}(x), \qquad \qquad \varphi_{i,h}(x):= \dfrac{\psi_{i,h}(x)}{\sigma_h(x)}.$$
Finally, for all $h$ such that $12 \sqrt{n}/h < \text{dist}(B_R; \partial \Omega)$, and for every $x \in B_R$, $\xi \in \mathbb{R}^n$, we define
$$f_h(x,\xi):= \sum_{i=1}^\infty \varphi_{i,h}(x) f(x_{i,h},\xi).$$
Notice that the above sum is finite and consists of at most $3^n$ terms. \\ 
\noindent \textbf{Step 1:} It is easy to check that each $f_h$ satisfies (H1)--(H3). Now, we prove that (H4) holds true. Let us fix $x_0 \in B_R$ and $\xi \in \mathbb{R}^n$.
By construction there exist at most $3^n$ cubes, $Q_{j_1,h}(x_{j_1,h}),...,Q_{j_{3^n},h}(x_{j_{3^n},h})$, such that for any $x$ in a suitable neighborhood $U$ of $x_0$ we can write
$$f_h(x,\xi)= \sum_{l=1}^{3^n} \varphi_{j_l,h}(x) f(x_{j_l,h},\xi) \quad \text{with} \quad \sum_{l=1}^{3^n} \varphi_{j_l,h}(x)=1.$$
Since $\sum_{l=1}^{3^n} \varphi_{j_l,h}(x)=1$, we have
$\varphi_{j_1,h}(x)=1-\sum_{l=2}^{3^n} \varphi_{j_l,h}(x)$. This gives that
$$ D_{x}\varphi_{j_1,h}(x)=-\sum_{l=2}^{3^n} D_{x}\varphi_{j_l,h}(x).$$
Therefore for all $x \in U$ we have that
\begin{equation}\label{der}
    D_{\xi x}f_h(x,\xi)= \sum_{l=1}^{3^n} D_x\varphi_{j_l,h}(x) D_\xi f(x_{j_l,h},\xi)= \sum_{l=2}^{3^n} D_x\varphi_{j_l,h}(x)\left[  D_\xi f(x_{j_l,h},\xi) -  D_\xi f(x_{j_1,h},\xi) \right].
\end{equation}
Observe that, since $(Q_{j_l,h}(x_{j_l,h}))_{j=1}^{3^n}$ is a finite family of cubes centered at $x_{j_l,h}$ with side length equal to $2/h$,
$$|x_{j_l,h}-x_{j_1,h}| \le \dfrac{c(n)}{h}.$$
By virtue of assumption (H4), we have that for all $l$
$$|D_\xi f(x_{j_l,h},\xi) - D_\xi f(x_{j_1,h},\xi) | \le \dfrac{c(n)K}{h} (1+|\xi|^2)^\frac{q-1}{2}.$$
On the other hand, for any $j$, there exists a set of indices $I_j$ containing $j$, with $\#(I_j)=3^n$, such that for all $x \in \mathbb{R}^n$
$$D_{x} \varphi_{j,h}(x)= \dfrac{D_{x}\psi_{j,h}(x)}{\sigma_h(x)}-\dfrac{\psi_{j,h}(x)}{\sigma^2_h(x)} \sum_{k \in I_j} D_x \psi_{k,h}(x).$$
Hence, since by construction $\sigma_h(x)  \ge 1$ for all $x$, we have that, for every $x \in \mathbb{R}^n$ and for all $j \in \mathbb{N}$,
$$|D_x \varphi_{j,h}(x)| \le (3^n+1)h \Vert D_x \psi \Vert_{\infty}.$$
In view of the above estimates and from \eqref{der}, we can conclude that for all $(x,\xi) \in B_R \times \mathbb{R}^n$ and for any $h$
$$| (f_h)_{\xi x} (x,\xi)| \le \tilde{c}(n) K \Vert D_x \psi \Vert_{\infty} (1+|\xi|^2)^\frac{q-1}{2}$$
and thus (H4) holds.

{
Now, fix a positive constant $M$. Then, for a.e.\ $x \in \Omega$ and every $\xi \in \mathbb{R}^n$ such that $|\xi| \le M$, we have
\begin{align*}
    |f_h(x,\xi)-f(x,\xi)| \le & \sum_{i=1}^N \varphi_{i,h}(x) |f(x_{i,h},\xi)-f(x,\xi)| \\
    \le & \sum_{i=1}^N \varphi_{i,h}(x) |x_{i,h}-x|(1+|\xi|^2)^\frac{q}{2}\\
    \le & \dfrac{1}{h} N (1+M^2)^\frac{q}{2}
\end{align*}
that goes to $0$ as $h \to + \infty$.
}
This proves that $f_h(x,\xi) \to f(x,\xi)$ uniformly in $B_R \times K$ for every $K$ compact subset of $\mathbb{R}^n$.  

\noindent \textbf{Step 2:} Assumptions (H5) and (H6) allows us to prove \eqref{lim}. Observe that, from (H3) it follows that
$$ |f(x,\xi)|\le C(M) (1+|\xi|^2)^{\frac{q}{2}}.$$ Hence, for every $v \in W^{1,q}(B_R)$, we have
\begin{equation}\label{conv}
\lim_h \int_{B_R} f_h(x,Dv(x))dx = \int_{B_R}f(x,Dv(x))dx,
\end{equation}
since $f_h(x,Dv(x)) \to f(x,Dv(x))$ a.e.\ in $B_R$. The growth assumption on $f$ yields
$$|f_h(x,Dv(x))| \le c (1+|Dv(x)|^2)^\frac{q}{2}. $$
and so  \eqref{conv} follows, by the use of Lebesgue's dominated convergence Theorem.

For a sequence $\varepsilon_k\downarrow 0$, let $\Phi_{\varepsilon_k}$ be a standard sequence of mollifiers.  For a fixed $u \in W^{1,p}(B_R)$ such that $x \mapsto f(x,Du(x)) \in L^1(B_R)$, let us  consider $u_{\varepsilon_k}(x)=(u * \Phi_{\varepsilon_k})(x)$ the usual mollification. Then, we observe 
\begin{equation}\label{grad}
    |Du_{\varepsilon_k} (x)| \le \left(  \int_{\Omega} |Du(y)|^p dy\right)^\frac{1}{p}  \left(  \int_{\Omega} |\Phi_{\varepsilon_k}(y)|^{p'} dy\right)^\frac{1}{p'} \le c \varepsilon^{-\frac{n}{p}}_k,
\end{equation}
where the constant $c$ depends on $\Vert Du \Vert_{L^p(\Omega)}$.

 Now, for  $x \in B_R$ and $\varepsilon_k$ small enough to have $B_{\varepsilon_k}(x) \Subset \Omega$, we choose $\tilde{x} \in B_{\varepsilon_k}(x)$ according to assumption (H5). By virtue of assumption (H6), we have
\begin{align*}
    f(x,Du_{\varepsilon_k}(x)) & \le f(\tilde{x},Du_{\varepsilon_k}(x))+ (1+|Du_{\varepsilon_k}(x)|^2)^\frac{q}{2}\varepsilon_k \\
    & \le f(\tilde{x},Du_{\varepsilon_k}(x))+ c\varepsilon_k^{-\frac{nq}{p}+1} 
\end{align*}
where in the last line we used  \eqref{grad}. Integrating over the ball $B_R$ with respect to $x$, from Jensen's inequality and assumption (H5), it follows 
\begin{align*}
    \int_{B_R} f(x,Du_{\varepsilon_k}(x)) dx
    \le & \int_{B_R}\int_{|x-y|<\varepsilon_k} f(\tilde{x},Du(y)) \Phi_{\varepsilon_k}(x-y)dydx+ c\varepsilon_k^{-\frac{nq}{p}+1+n} \\
    \le & c(\varepsilon_k) \int_{B_R}\int_{|x-y|<\varepsilon_k} f(y,Du(y)) \Phi_{\varepsilon_k}(x-y)dydx+ c\varepsilon_k^{-\frac{nq}{p}+1+n}\\
    = & {c(\varepsilon_k) \int_{B_R}(f(x,Du(x)))_{\varepsilon_k}dx+ c\varepsilon_k^{-\frac{nq}{p}+1+n}},
\end{align*}
{where $(f(\cdot,Du(\cdot)))_{\varepsilon_k}$ denotes the regularized function of $f(x,Du(x))$.}
We note that, by assumption \eqref{gap}, it holds $-\frac{nq}{p}+1+n>0$. Therefore, letting $k \to \infty$ in both sides of previous estimate,
\begin{equation*}
\lim_k \int_{B_R} f(x,Du_{\varepsilon_k}(x))dx \le {\lim_k} \int_{B_R}(f(x,Du(x)))_{\varepsilon_k}dx,
\end{equation*}
{where we used that $c(\varepsilon_k)\to 1$ as $\varepsilon_k\to 0$.}
Then, since {$f(x,Du(x))\in L^1(B_R)$ we have that} $(f(\cdot,Du(\cdot)))_{\varepsilon_k} \to f(x,Du(x))$ strongly in $L^1(B_R)$, and so
\begin{equation*}
\lim_k \int_{B_R} f(x,Du_{\varepsilon_k}(x))dx \le \int_{B_R}f(x,Du(x))dx.
\end{equation*}
Since $(u_{\varepsilon_k})$ converges to $u$ strongly in $W^{1,p}(B_R)$, by weak lower semicontinuity,
\begin{equation*}
\int_{B_R}f(x,Du(x))dx \le \lim_k \int_{B_R} f(x,Du_{\varepsilon_k}(x))dx. 
\end{equation*}
Hence, for $u \in W^{1,p}(B_R)$ such that $x \mapsto f(x,Du(x)) \in L^1(B_R)$ we find
\begin{equation}\label{conv1}
    \lim_k \int_{B_R} f(x,Du_{\varepsilon_k}(x))dx= \int_{B_R}f(x,Du(x))dx .
\end{equation}
Thanks to \eqref{conv}, for every fixed $k \in \mathbb{N}$ we get
\begin{equation*}
\lim_h \int_{B_R} f_h(x,Du_{\varepsilon_k}(x))dx = \int_{B_R}f(x,Du_{\varepsilon_k}(x))dx.
\end{equation*}
Therefore, for $k=1$ there exists $h_1$ such that
$$\left\vert \int_{B_R}  f_{h_1}(x,Du_{\varepsilon_1}(x)) - \int_{B_R}  f(x,Du_{\varepsilon_1}(x))  dx \right\vert < \dfrac{1}{2}.$$
For $k=2$ there exists $h_2>h_1$ such that
$$\left\vert \int_{B_R}  f_{h_2}(x,Du_{\varepsilon_2}(x)) - \int_{B_R}  f(x,Du_{\varepsilon_2}(x))  dx \right\vert < \dfrac{1}{2^2}.$$
Iterating this procedure  for every  $k\in \mathbb{N}$, we get that there exists $h_k > h_{k-1} >. \, . \, .>h_1$ such that
$$\left\vert \int_{B_R}  f_{h_k}(x,Du_{\varepsilon_k}(x)) - \int_{B_R}  f(x,Du_{\varepsilon_k}(x))  dx \right\vert < \dfrac{1}{2^k},$$
which implies, together with \eqref{conv1}, 
\begin{equation*}
\lim_k \int_{B_R} f_{h_k}(x,Du_{\varepsilon_k}(x))dx = \lim_k \int_{B_R} f(x,Du_{\varepsilon_k}(x))dx= \int_{B_R}f(x,Du(x))dx,
\end{equation*}
i.e.\ the conclusion \eqref{lim}.
\end{proof}

\section{Proof of Theorem \ref{thm2}.}\label{resec}

Combining the approximation given by Theorem \ref{thm1} with the regularity result of Theorem \ref{thmEMM}, we are  in a position to give the

% \begin{thm}
%     Let $f $ satisfy (H1)--(H6) with exponents $1 < p < q$ such that \eqref{gap} is in force. Let $\bar{u} \in W^{1,p}(\Omega)$ be a local minimizer of \eqref{functional}. Then, $\bar{u} \in W^{1,\infty}_{loc}(\Omega)$ and the following estimate holds
%     \begin{equation}\label{lipestimate}
%       \Vert D \bar{u} \Vert_{L^\infty(B_\rho)} \le C \left[1+ \int_{B_R} f(x,D \bar{u})dx  \right]^\frac{\sigma}{p},  
%     \end{equation}
%      for every concentric balls $B_\rho \subset B_R \Subset \Omega$, with $C$ and $\sigma$ positive constants depending at most on $n,p,q,m,M,K,\rho$ and $R$.
% \end{thm}
\begin{proof}[Proof of Theorem \ref{thm2}]
Let $\bar u\in W^{1,p}_{\mathrm{loc}}(\Omega)$ be a local minimizer of the functional at \eqref{functional}. 
Consider the boundary value problem in $B_R \Subset \Omega$
\begin{equation}\label{vp}
    \inf \left\{ \int_{B_R} f_h(x,Dv)dx \, : \, v \in W^{1,p}_0(B_R)+ \bar{u} \right\},
\end{equation}
where $f_h=f_h(x,\xi)$ is the sequence given by Theorem \ref{thm1}. 

By the strict convexity of $f_h$, there exists a unique minimum $v^h \in W^{1,p}(B_R)$ of the problem \eqref{vp}. Note that $f_h$ satisfy the assumptions of Theorem \ref{thmEMM} with $K>0$ constant and $r=+\infty$. In this case  assumption \eqref{gapEMM} reduces to \eqref{gap} and  we have that $v^h\in W^{1,\infty}_{loc}(B_R)$ with the estimate 
$$\Vert D v^h \Vert_{L^\infty(B_\rho)} \le C (1+K)^{\beta \gamma} \left[ 1+ \int_{B_r} f_h(x,D v^h)dx \right]^\frac{\gamma}{p},$$
for concentric balls $B_\rho \subset B_r \subset B_R$, with constants $C,\beta$ and $\gamma$ independent of $h$. By the minimality of $v^h$, it follows
\begin{equation*}
    \Vert D v^h \Vert_{L^\infty(B_\rho)} \le C (1+ K )^{\beta \gamma} \left[ 1+ \int_{B_R} f_h(x,D \bar{u})dx \right]^\frac{\gamma}{p}.
\end{equation*}
   Since {$f(x,D\bar{u})\in L^{1}(B_R)$},   by the property of $f_h$ at \eqref{lim} and the minimality of $\bar{u}$, we get
   \begin{equation}\label{limconv}
       \lim_h \int_{B_R} f_h(x,D \bar{u})dx = \int_{B_R} f(x,D \bar{u})dx { \ < +\infty},
   \end{equation}
 and so, 
   \begin{equation}\label{linfty}
    \Vert D v^h \Vert_{L^\infty(B_\rho)} \le C (1+ K )^{\beta \gamma} \left[ 1+ \int_{B_r} f(x,D \bar{u})dx \right]^\frac{\gamma}{p}. 
\end{equation} 
Therefore, up to subsequences, $(v^h)$ converges weakly* in $W^{1,\infty}_{\mathrm{loc}}(B_R)$  as $h \to \infty$ to a function $\bar{v} \in W^{1,\infty}_0(B_R)+ \bar{u}$
{such that
\begin{equation}\label{linftylim}
    \Vert D \bar{v} \Vert_{L^\infty(B_\rho)} \le C (1+ K )^{\beta \gamma} \left[ 1+ \int_{B_R} f(x,D \bar{u})dx \right]^\frac{\gamma}{p}.
\end{equation}} 
%{Obviously, \eqref{linftylim}  implies
%\begin{equation}\label{growth}
 %   \Vert D \bar{v} \Vert_{L^p(B_\rho)}  \le C (1+ K )^{\beta \gamma} \left[ 1+ \int_{B_r} f(x,D \bar{u})dx \right]^\frac{\gamma}{p}.
%\end{equation}}
%Then, up to subsequences, $(v^h_k)$ converges as $h \to \infty$ to $\bar{v}_k $ weakly in $W^{1,p}_{loc}(B_R)$,
%for each fixed $k$.

\noindent By the lower semicontinuity of the functional at \eqref{functional} in $W^{1,p}(B_R)$, the minimality of $v^h$ and the equality at \eqref{limconv}, we have
\begin{align*}
   & \int_{B_\rho} f(x, D \bar{v} )dx \le \liminf_h \int_{B_\rho} f(x, D v^h )dx \\
   & \,\,\,\,\,\,\,\,\,\, = \liminf_h \int_{B_\rho} f_h(x, D v^h )dx  \le \liminf_h \int_{B_R} f_h(x, D \bar{u})dx\\
   & \,\,\,\,\,\,\,\,\,\,= \int_{B_R} f(x, D \bar{u} )dx,
\end{align*}
where the first equality in the preceding formula follows from the uniform convergence in compact sets of $f_h \to f$, since $\Vert Dv^h \Vert_{L^\infty(B_\rho)} \le L$. 
\\
\noindent Letting $\rho \to R$,
\begin{equation} \label{intineq}\int_{B_R} f(x, D \bar{v} )dx \le \int_{B_R} f(x, D \bar{u} )dx,
\end{equation}
%By Remark \ref{rmk1} and the previous inequality, we have
%\begin{align}\label{conv2}
 %  \lim_k\int_{B_R} |D\bar{v}|^p dx \le & \ c \lim_k \int_{B_R} f(x,D\bar{v} )dx +c_\Omega \notag\\
  % \le & \ c\int_{B_R}f(x,D\bar{u})dx +c_\Omega< + \infty,
%\end{align}
%where last inequality is obtained by virtue of \eqref{conv1}.
%Therefore,  we have that $(\bar{v}_k)$ converges  to a function $\bar{v} $ weakly in $W^{1,p}_0(B_R)+\bar{u}$, as $k \to \infty$. This, again by the l.s.c. of the functional at \eqref{functional}, \eqref{intineq} and \eqref{conv1}, yields
%{\begin{eqnarray}
 %   \label{conv3}
  %  &&\int_{B_R} f(x,D\bar{v}) dx \le \lim_k \int_{B_R} f(x,D\bar{v}_k) dx \cr\cr
   % &\le& \lim_k\int_{B_R} f(x, D \bar{u}_{\varepsilon_k} )dx= \int_{B_R}f(x,D\bar{u})dx,
%\end{eqnarray}}
which implies $\bar{v}=\bar{u}$ for the strict convexity of $f$. %{Moreover, by \eqref{linftylim}, we have
%\begin{eqnarray*}
 %   &&\Vert D \bar u \Vert_{L^\infty(B_\rho)}\le \liminf_k \Vert D \bar{v}_k \Vert_{L^\infty(B_\rho)} \cr\cr
  %  &\le& C (1+ K )^{\beta \gamma}\liminf_k \left[ 1+ \int_{B_r} f(x,D \bar{u}_{\varepsilon_k})dx \right]^\frac{\gamma}{p}\cr\cr
   % &\le& C (1+ K )^{\beta \gamma} \left[ 1+ \int_{B_r} f(x,D \bar{u})dx \right]^\frac{\gamma}{p}. 
%\end{eqnarray*}}
Then $\bar{u} \in W^{1,\infty}_{loc}(B_R)$ and estimate \eqref{lipestimate} holds.
  \end{proof}

%\textcolor{blue}{Nel Teorema 4.1 della Sezione 4 non possiamo assumere che l'ipotesi (H4) valga per una funzione $K \in L^r$. Infatti, se mollifichiamo in $x$, le funzioni $f^\delta(x,\xi)$ non verificano più l'ipotesi (H5). Inoltre, non è necessario mollificare in $\xi$ perchè per ipotesi $f(x,\cdot) \in \mathcal{C}^2$. Forse si può provare ad indebolire le ipotesi su $f$ assumendo che sia convessa rispetto alla $\xi$ e strettamente convessa all'infinito (vedi \cite{el-mar-mas}).}

\section{Strict convexity only at infinity}\label{sec5}
This section is devoted to the proof of Theorem \ref{thm3}. We argue as in \cite{EMM1,fonsecafusco-marcellini}.
\begin{proof}[Proof of Theorem \ref{thm3}]
For  $k \in \mathbb{N}$, let us  define the following sequence 
$$f_k(x,\xi):= f(x,\xi)+\dfrac{1}{k} (1+|\xi|^2)^{\frac{q}{2}},$$
and, for a fixed ball $B_R \Subset \Omega$, let us consider the problem
\begin{equation}\label{vp1}
    \inf \left\{ \int_{B_R} f_k(x,Dv)dx \, : \, v \in W^{1,q}_0(B_R)+ \bar{u}_{\varepsilon} \right\},
\end{equation}
where $\bar{u}_\varepsilon$ is the usual mollification of $\bar{u}$. Since $f_k$ satisfies $q$-growth conditions, there exists a unique solution $v_k^\varepsilon \in W^{1,q}_0(B_R)+ \bar{u}_{\varepsilon} $ of the problem \eqref{vp1}. On the other hand, $f_k$ satisfies hypotheses (H1)--(H6), therefore, by Theorem \ref{thm2} with $p=  q$, we have $v^\varepsilon_k  \in W^{1,\infty}_{loc}(B_R)$ and the a priori estimate
 \begin{equation}\label{lipestimate1}
      \Vert D v^\varepsilon_k \Vert_{L^\infty(B_\rho)} \le C \left[1+ \int_{B_R} f_k(x,D v^\varepsilon_k)dx  \right]^\frac{\sigma}{p},  
    \end{equation}
    holds for every concentric balls $B_\rho \Subset B_R $, with $C$ and $\sigma$ positive constants depending at most on $n,p,q,m,M,K,\rho$.
Recalling the definition of $f_k$ and using the minimality of $v^\varepsilon_k$, by estimate \eqref{lipestimate1} we get
     \begin{equation*}
      \Vert D v^\varepsilon_k \Vert_{L^\infty(B_\rho)} \le C \left[1+ \int_{B_R} f(x,D \bar{u}_\varepsilon)dx + \dfrac{1}{k} \int_{B_R} (1+|D \bar{u}_\varepsilon|^2)^\frac{q}{2} dx  \right]^\frac{\sigma}{p}. 
    \end{equation*}
    Since $\bar{u}_\varepsilon \in W^{1,q}(B_R)$, we conclude that
    $$v^\varepsilon_k \rightharpoonup v^\varepsilon \quad \text{weakly* in } W^{1,\infty}(B_\rho) \text{ as } k \to  \infty  $$
     and by the previous estimate
     \begin{equation*}
       \Vert D v^\varepsilon \Vert_{L^\infty(B_\rho)} \le \liminf_{k}\Vert D v^\varepsilon_k \Vert_{L^\infty(B_\rho)} \le C \left[1+ \int_{B_R} f(x,D \bar{u}_\varepsilon)dx  \right]^\frac{\sigma}{p}. 
    \end{equation*} 
Now, we observe that from (H2)*, we have
$$|\xi|^p \le c(1+f(x,\xi)), \quad \text{for every } \xi \in \mathbb{R}^n \text{ and all } x \in \Omega .$$
Thus, we obtain
\begin{align*}
    \int_{B_{{R}}} |Dv^\varepsilon_k|^p dx \le & C \int_{B_{{R}}} (1+ f(x,Dv^\varepsilon_k)) dx \\
    \le & C \int_{B_{{R}}} (1+ f(x,Dv^\varepsilon_k)) dx + \dfrac{1}{k} \int_{B_{{R}}} (1+|Dv^\varepsilon_k|^2)^\frac{q}{2}dx \\
    \le & C \int_{B_{{R}}} (1+ f(x,D \bar{u}_\varepsilon)) dx + \dfrac{1}{k} \int_{B_{{R}}} (1+|D \bar{u}_\varepsilon|^2)^\frac{q}{2}dx
\end{align*}
with a constant $C$ independent of $k$. Therefore,
$$v^\varepsilon_k \rightharpoonup v^\varepsilon \quad \text{weakly in } W^{1,p}(B_{{R}}) \text{ as } k \to  \infty  $$
and 
$$v^\varepsilon_k-\bar{u}_\varepsilon \rightharpoonup v^\varepsilon-\bar{u}_\varepsilon \quad \text{weakly in } W^{1,p}_{0}(B_{{R}}) \text{ as } k \to  \infty.  $$
By weak lower semicontinuity, the minimality of $v_k^\varepsilon$ and \eqref{conv1}, we have
\begin{align*}
    \int_{B_R} f(x,Dv^\varepsilon)dx \le  &\liminf_k \int_{B_R} f(x,Dv_k^\varepsilon)dx \\
    \le & \int_{B_R} f(x,D\bar{u}_\varepsilon)dx\\
    \le & C \left( 1+ \int_{B_R} f(x,D\bar{u})dx \right)
\end{align*}
where $C$ is independent of $\varepsilon$. 
\\
Arguing as above, we find that there exists $\bar{v} \in W^{1,p}_0(B_R) + \bar{u} $ such that $v^\varepsilon - \bar{u}_\varepsilon \rightharpoonup \bar{v} -\bar{u} $ weakly in $W^{1,p}_0(B_R)$ as $\varepsilon \to 0$ and the a priori estimate
\begin{equation}\label{lipv}
       \Vert D \bar{v} \Vert_{L^\infty(B_\rho)} \le C \left[1+ \int_{B_R} f(x,D \bar{u})dx  \right]^\frac{\sigma}{p}. 
    \end{equation}
    holds. Moreover, $\bar{v}$ is a solution to \eqref{functional}. Indeed, the semicontinuity of the functional at \eqref{functional}, the minimality of ${v^{\varepsilon}_k}$ and \eqref{conv1} yield
\begin{align*}
    \int_{B_R} f(x,Dv)dx \le  &\liminf_\varepsilon \int_{B_R} f(x,Dv^\varepsilon)dx 
    \\\le& \liminf_{\varepsilon}{\liminf_k \int_{B_R} f(x,Dv_k^\varepsilon)dx}\\
    \le & \liminf_\varepsilon \int_{B_R} f(x,D\bar{u}_\varepsilon)dx\\
    = & \int_{B_R} f(x,D\bar{u})dx .
\end{align*}  
Since $f=f(x,\xi)$ is not strictly convex for $|\xi|>0$, we may not conclude that $\bar{u}=\bar{v}$ in $B_R$. Set
$$ \Sigma = \left\{  x \in B_R : \left|  \dfrac{D \bar{u}(x)+D \bar{v}(x)}{2} \right| >1 \right\} \quad \text{and} \quad w=\dfrac{\bar{u}+\bar{v}}{2}.$$
If $\Sigma$ has positive measure, then from the convexity of $f(x,\cdot)$, we have
\begin{equation}\label{eq1}
    \int_{B_R \setminus \Sigma} f(x,Dw)dx \le \dfrac{1}{2} \int_{B_R \setminus \Sigma} f(x,D \bar{u})dx + \dfrac{1}{2} \int_{B_R \setminus \Sigma} f(x,D \bar{v})dx.
\end{equation}
Now, the scrict convexity of $f(x,\xi)$ for $|\xi| \ge 1$ gives that
$$f(x,D\bar{u}) > f(x,Dw) + \langle f_\xi(x,Dw), D \bar{u}-Dw \rangle$$
and
$$f(x,D\bar{v}) > f(x,Dw) + \langle f_\xi(x,Dw), D \bar{v}-Dw \rangle.$$
By adding up the previous two inequalities, we have
\begin{equation}\label{eq2}
    \int_{B_R \cap \Sigma} f(x,Dw)dx < \dfrac{1}{2} \int_{B_R \cap \Sigma} f(x,D \bar{u})dx + \dfrac{1}{2} \int_{B_R \cap \Sigma} f(x,D \bar{v})dx.
\end{equation}
Adding \eqref{eq1} and \eqref{eq2}, we get a contradiction with the minimality of $\bar{u}$ and $\bar{v}$. Therefore, the set $\Sigma$ has zero measure, which implies that
$$\Vert D \bar{u} \Vert_{L^\infty(B_\rho)} \le 2 + \Vert D \bar{v} \Vert_{L^\infty(B_\rho)}$$
and so, by \eqref{lipv}, $\bar{u} \in W^{1,\infty}_{loc}(\Omega)$.
\end{proof}

\section{Some Examples}\label{examples}

In this section, we give some examples of functions for which Theorem \ref{thm2} holds.

\begin{itemize}
    \item[\bf{(i)}] Let $f=f(x,\xi)$ satisfy (H1)--(H4) and (H6) with $p=q$. Moreover, assume that there exist positive constants $c_0$ and $c_1$ such that
$$c_0 (1+|\xi|^2)^\frac{p}{2} \le f(x,\xi) \le c_1 (1+|\xi|^2)^\frac{p}{2}.$$
 Fix $ \varepsilon >0 $ and $\bar{x} \in \Omega$ such that $B_{\varepsilon}(x) \Subset \Omega $. Then, for every $x \in B(\bar{x},\varepsilon) $, assumption (H6) gives that
$$f(\bar{x},\xi) \le f(x,\xi)+\varepsilon H(1+|\xi|^2)^\frac{p}{2} \le {\tilde{c}(1+\varepsilon)f(x,\xi)}.$$
Therefore, $f$ satisfies (H5) with $c(\varepsilon):= \tilde{c}(1+\varepsilon)$.

\item[\bf{(ii)}] Consider  
$$f(x,\xi)=a_1(x)f_1(\xi)+a_2(x)f_2(\xi)$$
with the assumptions $f_1,f_2 \ge 0$, $a_1,a_2 \ge 0$ and $ a_1 \ge C_1>0$, $a_2 \in \mathcal{C}^{0,1}(\Omega)$. Functions with this special structure were first considered in \cite{el-mar-mas}. For $a_2 \equiv 1$, we have the energy function considered in \cite{BCM,colmin,colmin2,zhikov1}.

Let $B(x,\varepsilon) \Subset \Omega$ and $x_0 \in \overline{B(x,\varepsilon)}$ such that $a_2(x_0) \le a_2(y)$, for every $y \in {B(x,\varepsilon)}$. Then,
\begin{align*}
    f(x_0, \xi) = & \ a_1(x_0) f_1(\xi) + a_2(x_0) f_2(\xi) \\
    \le & \ [a_1(x_0)-a_1(y)]f_1(\xi) +a_1(y)f_1(\xi)+a_2(x_0)f_2(\xi) \\
    \le & \ [a_1(x_0)-a_1(y)]f_1(\xi) +a_1(y)f_1(\xi)+a_2(y)f_2(\xi)
\end{align*}
for all $y \in {B(x,\varepsilon)}$ and every $\xi \in \mathbb{R}^n$. Moreover, the Lipschitz continuity of $a_1$ yields
$$|a_1(x_0)-a_1(y)| \le L_1 |x_0-y|< L_1 \varepsilon.$$
Combining the previous inequalities, we find that
\begin{align*}
    f(x_0, \xi) 
    \le & \ L_1\varepsilon f_1(\xi) +a_1(y)f_1(\xi)+a_2(x_0)f_2(\xi) \\
    \le & \ L_1\varepsilon \dfrac{a_1(x)}{C_1}f_1(\xi)+f(x,\xi) \\
    \le & \ \dfrac{L_1 \varepsilon}{C_1}  \left[ a_1(x) f_1(\xi)+a_2(x)f_2(\xi)\right] +f(x,\xi) \\
    \le & \ \left[  \dfrac{L_1 \varepsilon}{C_1}+1\right]f(x,\xi)=:c(\varepsilon) f(x,\xi),
\end{align*}
where $c(\varepsilon) \ge 1$ and $c(\varepsilon) \to 1$, as $\varepsilon \to 0$.

In the same way, one can show that 
$$f(x,\xi)= \sum_{i=1}^N a_i(x)f_i(\xi)$$
satisfies (H5), if $a_i(x) \ge 0$ and $a_i \in \mathcal{C}^{0,1}(\Omega)$, $\forall i=1,...,N$, and there exists $j$ such that $a_j(x) \ge c_j >0$.

\item[\bf{(iii)}] Let us consider
$$f(x,\xi)=a(x)g(\xi)^{p(x)}$$
where $g \ge 1 $, $a \in \mathcal{C}^{0,1}(\Omega)$, with $a \ge C>0$, and $p \in C^0(\Omega)$.

Let $B(x,\varepsilon) \Subset \Omega$ and $x_0 \in \overline{B(x,\varepsilon)}$ such that $p(x_0) \le p(y)$, for every $y \in {B(x,\varepsilon)}$. Then,
\begin{align*}
    f(x_0, \xi) = & \ a(x_0) g(\xi)^{p(x_0)}\\
    = &\ a(x_0) g(\xi)^{p(x_0)-p(y)}g(\xi)^{p(y)}\\
    \le & \ a(x_0) g(\xi)^{p(y)},
\end{align*}
where in the last inequality we used that $g(\xi) \ge 1$ and $p(x_0)-p(y) \le 0$. Now, using the Lipschitz continuity of $a$ and the fact that $a \ge C$, we have
\begin{align*}
    f(x_0,\xi) \le & \ [a(x_0)-a(y)]g(\xi)^{p(y)} +a(y)g(\xi)^{p(y)} \\
    \le & \ L \varepsilon g(\xi)^{p(y)} +a(y)g(\xi)^{p(y)} \\
    \le & \ L \varepsilon \dfrac{a(y)}{C} g(\xi)^{p(y)} +a(y)g(\xi)^{p(y)} \\
    = & \ \left[ \dfrac{L \varepsilon}{C} +1 \right] f(y,\xi).
\end{align*}
Therefore, $f$ satisfies (H5) with 
$c(\varepsilon) := \frac{L \varepsilon}{C} +1$. For example, 
$$f(x,\xi)= a(x)(1+|\xi|^2)^\frac{p(x)}{2}.$$

\item[\bf{(iv)}] We want to show that our assumption (H5) is weaker than hypothesis (4) in \cite[Section 3]{esp-leo-pet}. 
Actually, suitably modifying the energy density considered in  \cite[Remark 3.6]{esp-leo-pet},  we construct an integral functional  for which hypothesis (4) fails but (H5) holds true. 

Let us consider $\Omega=B(0,1) \subset \mathbb{R}^2$ and the function $f: B(0,1) 
 \times \mathbb{R}^2\to \mathbb{R}$ defined by
$$f(x,\xi)=|\xi|^p+a(x)(|\xi|^q-1)+1,$$
where
\begin{center}
$a(x_1,x_2)=
    \begin{cases}
      \frac{x_2}{2}  \ \ & \text{if } x_2 >0, \\
      0 \ \ & \text{if } x_2 \le 0.
    \end{cases}
    $
\end{center}
Arguing similarly as in \cite[Remark 3.6]{esp-leo-pet}, one can prove that hypothesis (4) fails for this function.

Let $B(x,\varepsilon) \Subset \Omega$ and $\tilde{y} \in \overline{B(x,\varepsilon)}$ such that $a(\tilde{y}) \le a(y)$, for every $y \in {B(x,\varepsilon)}$. Then,
\begin{align*}
    f(\tilde{y},\xi) = & \ |\xi|^p+a(\tilde{y})(|\xi|^q-1)+1 \notag \\
    = & \ |\xi|^p+[a(\tilde{y})-a(y)](|\xi|^q-1)+1+a(y)(|\xi|^q-1) \notag \\
    = & \ f(y,\xi) + [a(\tilde{y})-a(y)](|\xi|^q-1). \label{esempio}
\end{align*}
If $|\xi| \ge 1 $, since $a(\tilde{y}) - a(y) \le 0$, we have 
$$ f(\tilde{y},\xi) \le f(y,\xi).$$
Now, assume $|\xi | < 1$. The Lipschitz continuity of $a$ implies
\begin{align*}
    f(\tilde{y},\xi) \le & \ f(y,\xi)+ |a(\tilde{y})-a(y)| ||\xi|^q-1| \\
    \le & \ f(y,\xi)+ L \varepsilon (1-|\xi|^q).
\end{align*}
Since 
$$|\xi|^p+M (|\xi|^q-1)+1 \le |\xi|^p+a(y) (|\xi|^q-1)+1,$$
for all $y \in B(x,\varepsilon)$, with $M=\max_{\overline{\Omega}} a(x)=\frac{1}{2}$, if we prove that there exists a positive constant $K$, independent both of $x$ and of $\varepsilon$, such that 
$$ 1-|\xi|^q \le K (|\xi|^p+M (|\xi|^q-1)+1) ,$$
then $f$ will satisfy (H5) with constant $c(\varepsilon)=1+L\varepsilon K$. It is sufficient to set
$$K:= \max_{|\xi| < 1} \dfrac{1-|\xi|^q}{|\xi|^p+M (|\xi|^q-1)+1} < +\infty.$$

\end{itemize}

\noindent {{\bf Acknowledgements.}
The authors are members of the Gruppo Nazionale per l’Analisi Matematica,
la Probabilità e le loro Applicazioni (GNAMPA) of the Istituto Nazionale di Alta Matematica (INdAM). A.G. Grimaldi and A. Passarelli di Napoli have been partially supported through the INdAM$-$GNAMPA 2024 Project “Interazione ottimale tra la regolarità dei coefficienti e l’anisotropia del problema in funzionali integrali a crescite non standard” (CUP: E53C23001670001). A. Passarelli di Napoli has been supported by the Università degli Studi di Napoli “Federico II”   through the project FRA-000022-ALTRI-CDA-752021-FRA-PASSARELLI and  by the Centro Nazionale per la Mobilità Sostenibile (CN00000023) - Spoke 10 Logistica Merci (CUP: E63C22000930007). In addition, A.G. Grimaldi has also been supported through the project: Geometric-Analytic Methods for PDEs and Applications (GAMPA) - funded by European Union - Next Generation EU within the PRIN 2022 program (D.D. 104 - 02/02/2022 Ministero dell'Università e della Ricerca). This manuscript reflects only the authors' views and opinions and the Ministry cannot be considered responsible for them.}

\end{document}